\pdfoutput=1
\documentclass{amsart}

\usepackage[T1]{fontenc}
\usepackage[utf8]{inputenc}
\usepackage{amssymb,amsmath,mathtools}
\usepackage{alphalph}
\usepackage{enumitem}
\usepackage{tikz}
\usepackage{verbatim}
\usetikzlibrary{arrows.meta,calc,positioning}
\usepackage[hidelinks]{hyperref}
\usepackage[margin=1.4in]{geometry}
\usepackage{quiver}

\hypersetup{
	colorlinks   = true, %Colours links instead of ugly boxes
	urlcolor     = blue, %Colour for external hyperlinks
	linkcolor    = black, %Colour of internal links
	citecolor   = blue %Colour of citations
}

\newcommand{\N}{\mathbb{N}}
\newcommand{\kk}{\mathbb{K}}    % ground field

\newcommand{\bA}{\mathbf{A}}
\newcommand{\bH}{\mathbf{H}}
\newcommand{\bI}{\mathbf{I}}
\newcommand{\bX}{\mathbf{X}}

\newcommand{\calF}{\mathcal{F}}
\newcommand{\calZ}{\mathcal{Z}}

\newcommand{\PaRCD}{\mathsf{PaRCD}}
\newcommand{\hPaRCD}{{\PaRCD}_{\kk}}
\newcommand{\PaCD}{\mathsf{PaCD}}
\newcommand{\hPaCD}{{\PaCD}_{\kk}}
\newcommand{\magma}{\Omega}
\newcommand{\cyc}{\mathbf{cyc}}
\newcommand{\Op}{\mathbf{Op}}

\newcommand{\id}{\operatorname{id}}
\newcommand{\Hom}{\operatorname{Hom}}
\newcommand{\Aut}{\operatorname{Aut}}
\DeclareMathOperator{\Prim}{Prim}
\DeclareMathOperator{\spanlin}{span}

\newcommand{\GRT}{{\mathsf{GRT}}}
\newcommand{\ASTR}{{\mathsf{ASTR}}}
\newcommand{\lgrt}{{\mathfrak{grt}}}
\newcommand{\ff}{\mathfrak{f}}
\newcommand{\ft}{\mathfrak{t}}
\newcommand{\fft}{\mathfrak{ft}}
\newcommand{\fg}{\mathfrak{g}}

\newcommand{\hff}{\widehat{\ff}_2}          % completed free Lie algebra on x,y
\newcommand{\fd}{\mathfrak{d}}
\newcommand{\fh}{\mathfrak{h}}
\newcommand{\fk}{\mathfrak{k}}
\newcommand{\UU}{\widehat U}
\newcommand{\cl}[1]{\overline{#1}}

\newtheorem{theorem}{Theorem}[section]
\newtheorem{thm}[theorem]{Theorem}
\newtheorem{prop}[theorem]{Proposition}
\newtheorem{lemma}[theorem]{Lemma}
\newtheorem{cor}[theorem]{Corollary}

\newtheoremstyle{alphatheorem}
  {}{}{\itshape}{}{\bfseries}{.}{ }
  {\thmname{#1} \thmnumber{\AlphAlph{\value{introtheorem}}}\thmnote{ (#3)}}
\theoremstyle{alphatheorem}
\newtheorem{introtheorem}{Theorem}

\theoremstyle{definition}
\newtheorem{definition}[theorem]{Definition}

\newtheorem{remark}[theorem]{Remark}

\numberwithin{equation}{section}

\begin{document}

% \title[short text for running head]{full title}
\title[Cyclic reflections and the flip equation]{Cyclic reflections and the flip equation for the graded Grothendieck--Teichmüller group}

%    author information
% \author[short version for running head]{name for top of paper}
\author{Chandan Singh}
\address{School of Mathematics and Statistics, The University of Melbourne, Melbourne, Victoria, Australia}
\email{chandan.singh1@unimelb.edu.au}

%    \subjclass is required.
\subjclass[2020]{Primary 18M75; Secondary 18M70, 17B01, 17B35, 16T05}

\keywords{Grothendieck--Teichm\"uller group, Drinfeld associator, pentagon equation, Drinfeld--Kohno Lie algebra, cyclic operad, chord diagrams}

\date{}

%    "Communicated by" -- provide editor's name; required.
\commby{}

%    Abstract is required.
\begin{abstract}
Furusho, Hirose and Komiyama reformulated the pentagon equation using an involution called the flip, which led to a new characterisation of the graded Grothendieck--Teichm\"uller group $\mathsf{GRT}_1$. We show that the flip is a reflection in the cyclic structure of the operad of framed Drinfeld–Kohno Lie algebras, with the framings removed. Applying the reflection twice to the pentagon gives the two-cycle relation for every Drinfeld associator. This relation is precisely the condition for cyclic compatibility on the associativity morphism, giving a new proof that $\mathsf{GRT}_1$ acts by cyclic automorphisms of parenthesized ribbon chord diagrams.
\end{abstract}

\maketitle

\section{Introduction}\label{sec:intro}

The graded Grothendieck--Teichm\"uller group $\GRT_1$ of Drinfeld \cite{Drin} acts freely and transitively on the set of Drinfeld associators with a fixed non-zero coupling constant. It is the group of group-like series $g(x,y)$ in two non-commuting variables satisfying the pentagon, two-cycle and three-cycle relations, and by Furusho \cite{Furusho_pentagon} the pentagon alone suffices once the quadratic part of $g$ vanishes. 
\medskip 

$\GRT_1$ has two descriptions that look unrelated. On the operadic side, $\GRT_1$ is the group of automorphisms of the prounipotent completed operad of parenthesized chord diagrams \cite{BN}, \cite[Chapter~10]{FresseBook1}. These automorphisms extend to the framed operad, where they preserve the cyclic structure \cite{willwacher2024cyclic,singh2026cyclic}. On the mould-theoretic side, Furusho, Hirose and Komiyama \cite{FHK} recast the pentagon equation using an involution of the Drinfeld--Kohno Lie algebras, the \emph{flip}, and use it to identify associators with Écalle's balanced moulds \cite{Jean2003}. This paper explains how the two descriptions meet: the flip is a reflection in the cyclic structure of chord diagrams.

% Furusho, Hirose and Komiyama \cite{FHK} recast of the pentagon for all associators using an involution, the \textit{flip}. We explain this reformulation through the cyclic structure of chord diagrams and use it to give a new proof that $\GRT_1$ acts by cyclic automorphisms.

\subsection*{The flip}
Let $\ft_n$ be the Drinfeld--Kohno Lie algebra on the strands $0,\dots,n-1$, with generators $t_{ij}=t_{ji}$. The \emph{flip} of \cite{FHK} is the involution of $\ft_n$ given by
\begin{equation}\label{eq:intro-flip}
F_n(t_{ij})=t_{n-i,\,n-j}\quad(0<i,j<n),\quad F_n(t_{0j})=-\sum_{k<n}t_{k,\,n-j}\quad(0<j<n),
\end{equation}
with $t_{kk}=0$. It is used on the \emph{dagger subalgebra} $\UU(\ft_n)^\dagger=\bigcap_{0<j<n}\ker\partial_{0j}$ of the completed enveloping algebra, where $\partial_{0j}$ is the derivation extracting the coefficient of $t_{0j}$ (Section~\ref{subsec:flip}). By \cite[Theorem~78]{FHK}, a group-like series with no linear term in $x$ is an associator up to a factor $\exp(\alpha y)$ if and only if it satisfies a \emph{flip equation}: $F_4$ carries one path around the pentagon to the other, up to a correction supported on the strands $1,2,3$.

\medskip 

Three aspects of \eqref{eq:intro-flip} require further explanation. First, the flip is not induced by a permutation of the strands $0,\dots,n-1$, since $F_n(t_{0j})$ is a sum of generators. Second, its domain is cut out by derivations with no evident operadic meaning. Third, the hypothesis on the linear term in \cite[Theorem~78]{FHK} has no clear origin. Modulo the centre $\kk c_n$, $F_n$ is Ihara's action of $i\mapsto n-i$ on the sphere braid Lie algebra $\ft_n/\kk c_n$ \cite[Section~5.3]{ihara_braids}, which suggests a cyclic explanation. However, $(\kk c_n)_n$ is not an operadic ideal (Remark~\ref{rem:noquotient}), so this does not relate the flip to 
the operadic description of $\GRT_1$. We keep the framings instead. In the framed operad the reflection is defined exactly, and all three questions have the same answer.

\subsection*{The flip is a cyclic reflection}
The operad of parenthesized chord diagrams has a framed version $\hPaRCD$, with coefficients in the framed Drinfeld--Kohno Lie algebras $\fft_n\supset\ft_n$. This is a \emph{cyclic} operad \cite{severa2009formality,campos2019configuration,willwacher2024cyclic}: the group $\Sigma_{n+1}$ of permutations of all $n+1$ legs of an arity-$n$ operation acts, exchanging inputs with the output. 
Our first result is that the flip is the reflection $\rho_n\colon i\mapsto n-i$ of the $n+1$ legs, where the extra leg is the output. The reflection produces framings $T_{ii}$, which we compute exactly (Theorem~\ref{thmA}). The flip is what remains when they are discarded, and the dagger subalgebra is precisely the locus where no framing appears. For the pentagon, the same mechanism explains the hypothesis of \cite[Theorem~78]{FHK}: the reflected pentagon of a group-like series is free of framings exactly when the series has no linear term in $x$. 

In arity four, $\rho_4$ is a mirror symmetry of Mac Lane's pentagon, and the flip equation says that a path and its mirror image differ by the edge fixed by the mirror (Figure~\ref{fig:pentagon}). Applying the mirror twice yields the two-cycle relation $g(x,y) g(y,x) =1$ for every associator. Moreover, the two-cycle relation is exactly the condition for the automorphism of $\hPaRCD$ determined by $g$ to commute with the cyclic action on the associativity morphism (Theorem~\ref{thmB}). Hence the pentagon alone forces cyclic compatibility on the associativity morphism, and every automorphism in the classification of Bar-Natan \cite{BN} and Fresse \cite{FresseBook1} is cyclic.

\subsection*{Statements}
Let $s\colon\ft_n\hookrightarrow\fft_n$ and $q\colon\fft_n\twoheadrightarrow\ft_n$ be the inclusion and the projection killing the framings $T_{ii}$. Labelling the output of an arity-$n$ operation by $n$, write $\sigma\mapsto P_\sigma$ for the action of $\Sigma_{n+1}$ on $\fft_n$, and put $\rho_n(i)=n-i$ and $\tau_n=(0\,n)$. Let $\fd_n=\cl{[\ft_n,\ft_n]}$ and $\fh_n=\fd_n\oplus\spanlin\{t_{ij}:0<i<j<n\}$. 

\begin{introtheorem}[Cyclic origin of the flip]\label{thmA}
Let $n\geq2$.
\begin{enumerate}[label=\textup{(\arabic*)}]
\item $F_n=q\,P_{\rho_n}\,s$, and $P_{\rho_n}s=sF_n$ on $\UU(\fd_n)$.
\item For every $u\in\UU(\ft_n)$,
\[
P_{\rho_n}s(u)=\sum_{\nu\in\N^{n-1}}\frac{(-1)^{|\nu|}}{\nu!}\,T^\nu\,sF_n\bigl(\partial^\nu u\bigr),
\]
where  for $\nu\in\N^{n-1}$, $T^\nu=\prod_{j=1}^{n-1}T_{n-j,n-j}^{\nu_j}$, $\partial^\nu=\prod_{j=1}^{n-1}\partial_{0j}^{\nu_j}$, $|\nu|=\sum_j\nu_j$ and $\nu!=\prod_j\nu_j!$.

\item $\UU(\ft_n)^\dagger=\bigl\{u\in\UU(\ft_n):P_{\rho_n}s(u)\in s\,\UU(\ft_n)\bigr\}=\UU(\fh_n)$, and $P_{\rho_n}s=sF_n$ on $\UU(\ft_n)^\dagger$.
\item $\bigcap_{\sigma\in\Sigma_{n+1}}P_\sigma\bigl(s\,\UU(\ft_n)\bigr)=s\,\UU(\fd_n)$, and $s\fd=[\fft,\fft]$ is the largest cyclic suboperad of $\fft$ contained in $s\ft$.
\end{enumerate}
\end{introtheorem}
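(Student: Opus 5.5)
The whole statement should follow from one computation: how $P_{\rho_n}$ acts on the generators of $s\,\ft_n$. Everything else is a Taylor-expansion argument in the central framings.

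First I will write down $P_{\rho_n}$ on generators using the cyclic structure of $\fft$. There the output leg $n$ carries $t_{in}$, expressed through the relation $\sum_{k\le n}t_{ik}=0$ with framing terms $T_{ii}$ (the exact sign and framing convention comes from the definition of the cyclic action). For $0<i,j<n$ one gets $P_{\rho_n}(t_{ij})=t_{n-i,n-j}$. For $0<j<n$,
\[
P_{\rho_n}(t_{0j})=t_{n,n-j}=-\sum_{k<n}t_{k,n-j}-T_{n-j,n-j}=sF_n(t_{0j})-T_{n-j,n-j}.
\]
I expect this bookkeeping to be the first delicate point: one must check that exactly one framing $T_{n-j,n-j}$ appears, with coefficient $-1$, to match the sign $(-1)^{|\nu|}$ in (2).

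Next I will check that each $\partial_{0j}$ is a well-defined continuous derivation. It kills every Lie bracket of generators, since it sends each generator to a scalar, so it respects the homogeneous quadratic relations. The $\partial_{0j}$ pairwise commute. Since the $T$'s are central, $\exp\bigl(-\sum_j T_{n-j,n-j}\partial_{0j}\bigr)$ is a continuous algebra automorphism of $\UU(\ft_n)[[T]]$. So $\Phi:=sF_n\circ\exp\bigl(-\sum_jT_{n-j,n-j}\partial_{0j}\bigr)$ is a continuous algebra map. It agrees with $P_{\rho_n}s$ on all generators, so the two are equal, and expanding the exponential gives (2). Part (1) then follows at once. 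Applying $q$ kills every $T^\nu$ with $\nu\ne0$, so $qP_{\rho_n}s=F_n$. On $\UU(\fd_n)$ all $\partial_{0j}$ vanish, because $\fd_n$ consists of brackets, so only the $\nu=0$ term survives.

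For (3) I will use that $\fft_n\cong\ft_n\oplus\bigoplus_i\kk T_{ii}$ with the $T_{ii}$ central. Hence $\UU(\fft_n)\cong\UU(\ft_n)[[T_{11},\dots]]$ and the monomials $T^\nu$ are independent over $s\,\UU(\ft_n)$. Then $P_{\rho_n}s(u)\in s\,\UU(\ft_n)$ if and only if $F_n(\partial^\nu u)=0$ for all $\nu\ne0$. Since $F_n$ is an involution, this says $\partial_{0j}u=0$ for all $j$, i.e. $u\in\UU(\ft_n)^\dagger$. On that set (2) collapses to $sF_n$. For the identification with $\UU(\fh_n)$, note that $\fh_n\supset\fd_n$ is an ideal and $\ft_n/\fh_n$ is abelian with basis the images of $t_{0j}$. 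Ordered PBW then gives $\UU(\ft_n)\cong\UU(\fh_n)\hat\otimes\kk[[t_{01},\dots,t_{0,n-1}]]$, where $\partial_{0j}$ acts as $\partial/\partial t_{0j}$ on the second factor. Its joint kernel is therefore $\UU(\fh_n)$.

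For (4), the inclusion $\supseteq$ is direct. $s\fd_n=[\fft_n,\fft_n]$ because the $T$'s are central, and this subspace is preserved by every Lie automorphism $P_\sigma$. For $\subseteq$, I will use that $\Sigma_{n+1}$ is generated by $\Sigma_n$ (which preserves $s\ft_n$) and $\rho_n$. Conjugating the criterion of (3) by $\sigma\in\Sigma_n$ shows that membership in $P_{\sigma\rho_n^{-1}\sigma^{-1}}(s\UU(\ft_n))$ forces the vanishing of the transported derivations $\partial_{\sigma(0)\sigma(j)}$. Letting $\sigma$ range over $\Sigma_n$ kills every linear-coefficient derivation $\partial_{ij}$. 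The joint kernel of all of these is $\UU(\fd_n)$, by the same PBW argument applied to $\ft_n/\fd_n=\spanlin\{t_{ij}\}$. The "largest cyclic suboperad" claim follows by applying this at the Lie level in each arity. I expect the main obstacle to be this last step: checking carefully that $P_\sigma$ conjugates $\partial_{0j}$ to $\partial_{\sigma(0)\sigma(j)}$ compatibly with the framing terms.
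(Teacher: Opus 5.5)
Your proposal is correct and follows essentially the paper's route. You compute $P_{\rho_n}s$ on generators, finding the single defect $-T_{n-j,n-j}$, and obtain the Taylor formula via $\exp(-D)$ using centrality of the framings. You then use the PBW kernel lemma to identify $\UU(\ft_n)^\dagger$ with $\UU(\fh_n)$, and conjugate by $\Sigma_n$ (via $P_\pi\partial_{ab}P_\pi^{-1}=\partial_{\pi(a)\pi(b)}$) to kill every $\partial_{ab}$ and land in $\UU(\fd_n)$.

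Three points need tidying:
\begin{enumerate}
\item Building $\Phi$ presupposes that $F_n$ is a well-defined algebra map. The paper secures this by defining $F_n:=qP_{\rho_n}s$ first, as a composite of Lie maps, which your Step 1 already allows.
\item Step (3) needs $F_n$ to be injective, and you assume this without proof. The paper proves $F_n^2=\id$ from $P_{\rho_n}^2=1$, using that the defect $T_{n-j,n-j}$ is sent to $T_{jj}\in\ker q$.
\item The suboperad claim needs closure of $s\fd$ under each $\circ_a$. This holds because the compositions are Lie maps and the closed derived algebra of $\fft_m\oplus\fft_k$ is $s\fd_m\oplus s\fd_k$.
\end{enumerate}
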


% In particular, the involutivity of $F_n$ \cite[Lemma~60]{FHK} is the relation $\rho_n^2=1$, and the dagger subalgebra \cite[Definition~59, Lemma~61]{FHK} is the enveloping algebra of the Lie ideal $\fh_n$.

% For $g\in\exp(\hff')$, where $\hff'$ is the derived algebra of the completed free Lie algebra on $x,y$, put

Let $\hff'$ be the derived algebra of the completed free Lie algebra $\hff$ on $x,y$. For $g\in\exp(\hff)$ put $S_g=g^{0,1,2}g^{0,12,3}$ and $S'_g=g^{01,2,3}g^{0,1,23}$ (see Section~\ref{subsec:grt}), so that the pentagon equation \textup{(P)} reads $S'_g=S_g\,g^{1,2,3}$.

% Let $\hff'$ be the derived algebra of the completed free Lie algebra $\hff$ on $x,y$. For $g\in\exp(\hff)$ put
% \[
% S_g=g^{0,1,2}g^{0,12,3},\qquad S'_g=g^{01,2,3}g^{0,1,23},\qquad C_g=S_g^{-1}F_4(S_g),
% \]
% let (P) denote the pentagon equation $S'_g=S_g\,g^{1,2,3}$, and let $\fd_{123}\subset\fd_4$ be the derived algebra of the copy of $\ft_3$ on the strands $1,2,3$ (see Section~\ref{subsec:grt} for the notation $g^{A,B,C}$). 

\medskip 

For $g\in\exp(\hff')$ let $\Theta_g$ be the $\Sigma_3$-equivariant assignment $\bA\mapsto g(T_{01},T_{12})\bA$, where $\bA$ is the associativity morphism of $\hPaRCD$. For $g\in\GRT_1$ this is the automorphism of Theorem~\ref{thm:classification}. Let $\Aut^+_{\cyc,1}(\hPaRCD)$ be the group of object-fixing automorphisms of the cyclic operad $\hPaRCD$ fixing the framing, the chord and the braiding and preserving the unframed suboperad $\hPaCD$.

\begin{introtheorem}[Flip equation and cyclic automorphisms]\label{thmB}
\leavevmode
\begin{enumerate}[label=\textup{(\arabic*)}]
\item Let $g\in\exp(\hff)$. Then $P_{\rho_4}s(S_g)\in s\,\UU(\ft_4)$ if and only if $g$ has no linear term in $x$, and in that case $P_{\rho_4}s(S_g)=s(S'_g)$. Consequently
\[
\bigl\{g\in\exp(\hff):P_{\rho_4}s(S_g)=s(S_g\,h^{1,2,3})\text{ for some }h\in\exp(\hff')\bigr\}=\ASTR\cdot\exp(\kk y),
\]
and $h=\varphi$ when $g=\varphi\exp(\alpha y)$ with $\varphi\in\ASTR$. This is the flip equation of \cite[Theorem~78]{FHK}, without the hypothesis on the linear term in $x$. For $g\in\ASTR$ the relation $P_{\rho_4}^2=1$ gives $g(x,y)\,g(y,x)=1$.

\item Let $g\in\exp(\hff')$. Then $\Theta_g$ commutes with the action of $\Sigma_4$ on $\bA$ if and only if $g(x,y)\,g(y,x)=1$.
\end{enumerate}
Consequently, together with Theorem~\ref{thm:classification}, $g\mapsto\Theta_g$ is a group isomorphism $(\GRT_1, \circledast) \cong\Aut^+_{\cyc,1}(\hPaRCD)$, for the product $\circledast$ defined in ~\eqref{eq:grouplaw}.
\end{introtheorem}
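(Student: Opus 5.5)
The plan is to reduce everything to Theorem~\ref{thmA}(2)--(3) in arity $4$, applied to the specific element $S_g$.

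For part (1), I would compute the derivatives $\partial_{0j}S_g$ for $j=1,2,3$. In $S_g=g^{0,1,2}g^{0,12,3}$ the generator $t_{01}$ appears only as the $x$-argument of $g^{0,1,2}$. Hence $t_{0j}$ for $j=1,2,3$ enters only linearly through the first argument $x$ of each factor: $t_{01}$, and $t_{01}+t_{02}$ in $g^{0,12,3}$. The derivation $\partial_{0j}$ extracts the coefficient of $t_{0j}$, i.e.\ the abelianisation-type coefficient. Since $g$ is group-like, $\partial_{0j}$ of each factor equals the linear coefficient of $x$ in $\log g$ times the factor. So writing $\log g=ax+by+(\text{brackets})$, we get $\partial_{0j}S_g=c_j a\,S_g$ with explicit $c_j\in\{0,1,2\}$, not all zero. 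I need to check that brackets contribute nothing, which holds because $\partial_{0j}$ kills $\UU(\fd_4)\subset\UU(\fh_4)$.

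Then Theorem~\ref{thmA}(2) gives $P_{\rho_4}s(S_g)=\exp\bigl(-a\sum_j c_jT_{4-j,4-j}\bigr)\,sF_4(S_g)$. This lies in $s\UU(\ft_4)$ iff $a=0$. When $a=0$, Theorem~\ref{thmA}(3) gives $P_{\rho_4}s(S_g)=sF_4(S_g)$. I would then verify directly from \eqref{eq:intro-flip} that $F_4$ sends $g^{0,1,2}\mapsto g^{0,1,23}$-type factors and $g^{0,12,3}\mapsto g^{01,2,3}$ in the correct order, yielding $S'_g$. The reflection $i\mapsto 4-i$ reverses the order of parenthesizations, mirroring the pentagon, and the $y$-arguments are mapped correctly modulo the centre, which commutes with the factors. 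This local check of generators is the main computational step.

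For the set equality, I would write $g=g_0\exp(\alpha y)$ with $g_0$ free of linear $x$. By the preceding step, the equation for $g_0$ becomes $S'_{g_0}=S_{g_0}h^{1,2,3}$, and the $\exp(\alpha y)$ factor contributes an explicit element I must compare. Showing that this forces $h=g_0$ (restrict to strands $1,2,3$ by deleting strand $0$) recovers the pentagon, i.e.\ $g_0\in\ASTR$. The reverse inclusion comes from the pentagon directly. For a general $g$ with $a\ne0$ the framing obstruction already rules it out.

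For the two-cycle relation, I would apply $P_{\rho_4}$ twice. From $P_{\rho_4}s(S_g)=s(S_g g^{1,2,3})$ and the fact that $F_4$ maps strands $1,2,3$ to $3,2,1$, we get $F_4(g^{1,2,3})=g^{3,2,1}$. Involutivity then gives $S_g=S_g g^{1,2,3}g^{3,2,1}$, so $g(x,y)g(y,x)=1$. Here I need $g^{3,2,1}=g(t_{23},t_{12})$ and that the framing-free condition persists, which follows from Theorem~\ref{thmA}(3) since $S_gg^{1,2,3}\in\UU(\fh_4)$.

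For part (2), $\Sigma_4$ acting on $\bA$ is generated modulo the equivariant $\Sigma_3$ by the transposition $\tau_3=(0\,3)$, or equivalently $\rho_3$. The morphism $\bA$ is carried to $\bA^{-1}$ up to braidings. Since $g\in\exp(\hff')$ lies in $\UU(\fd_3)$, Theorem~\ref{thmA}(1) gives $P_{\rho_3}g(T_{01},T_{12})=F_3(g)(T_{01},T_{12})=g(T_{12},T_{01})$. This uses $F_3(t_{01})=t_{12}$ modulo the centre, and the centre is killed in $\fd_3$. Commutation then reduces to $g(x,y)^{-1}=g(y,x)$. The main obstacle will be the precise bookkeeping of how $\rho_3$ acts on $\bA$ as a morphism, in particular its source and target, so that the correct inverse appears. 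The final isomorphism then combines (1), which shows every $g\in\GRT_1$ is two-cycle, with (2) and Theorem~\ref{thm:classification}.
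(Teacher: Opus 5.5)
The overall plan matches the paper, and several pieces are fine. The identity $\partial_{0j}S_g=c_ja\,S_g$ with $(c_1,c_2,c_3)=(2,1,0)$, fed into Theorem~\ref{thmA}(2), gives $P_{\rho_4}s(S_g)=\exp(-2aT_{33}-aT_{22})\,sF_4(S_g)$. This is a slightly more explicit version of the paper's appeal to Corollary~\ref{cor:dagger}. Your $\pi_0$ sketch of the set equality also works: by the four-term relations $S_g=S_\varphi e^{\alpha c_{123}}$ and $S'_g=S'_\varphi e^{\alpha c_{123}}$ with $c_{123}=t_{12}+t_{13}+t_{23}$, whereas the paper just cites \cite[Theorem~78(1)]{FHK}. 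The two-cycle argument is the paper's, given $\ASTR\subset\exp(\hff')$.

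However, the two computations you single out as the main steps are both wrong as stated. In (1), $F_4$ is an algebra automorphism, not an anti-automorphism, and it sends $g^{0,1,2}\mapsto g^{01,2,3}$ and $g^{0,12,3}\mapsto g^{0,1,23}$; geometrically the reflection fixes $p_0$ and swaps $p_1\leftrightarrow p_4$. Your matching would give $g^{0,1,23}g^{01,2,3}\neq S'_g$. The corrections also sit in the $x$-arguments, not the $y$-arguments: $F_4(t_{01})=(t_{02}+t_{12})+(t_{01}-c_4)$ and $F_4(t_{01}+t_{02})=t_{01}-(c_4+t_{23})$. These corrections are not central; they commute with both arguments by the four-term relations, which is all \eqref{eq:shift} needs. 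The $y$-arguments map exactly, $F_4(t_{12})=t_{23}$ and $F_4(t_{13}+t_{23})=t_{12}+t_{13}$, and that is why a factor $\exp(\alpha y)$ is harmless.

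In (2), the claim $F_3(t_{01})\equiv t_{12}$ modulo the centre is false. In fact $F_3(t_{01})=-(t_{02}+t_{12})=t_{01}-c_3$ and $F_3(t_{12})=t_{12}$, so $P_{\rho_3}$ \emph{fixes} the coefficient $g(T_{01},T_{12})$. The exchange of arguments comes from the objects instead. Relabelling by $\rho_3=(0\,3)(1\,2)$ and re-rooting gives $P_{\rho_3}\bA=A_{210}^{-1}=(1\colon(2(10))\to((21)0))$. Its image under \eqref{eq:thetaext} has coefficient $g(T_{12},T_{01})^{-1}$, and equating this with $g(T_{01},T_{12})$ gives the two-cycle relation by Lemma~\ref{lem:injective}. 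With your coefficient and this target, the condition would read $g(T_{12},T_{01})^2=1$, forcing $g=1$. So you reach the right equation only through the unverified ``$\bA^{-1}$ up to braidings''.

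Two steps are also missing. For the ``if'' direction, a check on $\bA$ alone does not suffice. An element $\sigma=\pi_1\tau_3\pi_2$ requires compatibility of $P_{\tau_3}$ on every $P_{\pi_2}\bA=A_{abc}$. The paper checks $A_{012}$, $A_{021}$ and $A_{102}$ (the last holds for every $g$) and gets the other three from $P_{\tau_3}^2=1$. Second, the final isomorphism does not follow from (1), (2) and Theorem~\ref{thm:classification} alone. You need Lemma~\ref{lem:generators}: commuting with the cyclic action on $\exp(\kk\bI)$, $\exp(\kk\bH)$, $\bX$, $\bA$ propagates to all of $\hPaRCD$, via the compatibility \eqref{eq:cyclic-equivariance} of partial compositions with the cyclic action. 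You also need the checks in arities one and two, which are trivial because $\Theta_g$ is the identity there.
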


Theorem~\ref{thmB}(1) refines \cite[Theorem~78]{FHK} by the framed identity $P_{\rho_4}s(S_g)=s(S'_g)$, which identifies the flip equation with a symmetry of the pentagon in $\hPaRCD$ and removes the hypothesis on the linear term. This gives a short proof (using $P_{\rho_4}^2=1$) that the pentagon implies the two-cycle relation, an alternative to \cite[Lemma~6]{Furusho_pentagon}. The final isomorphism recovers, for $\GRT_1$, the cyclicity of the Grothendieck--Teichm\"uller action \cite[Proposition~5.2]{willwacher2024cyclic}, \cite[Theorem~3.5]{singh2026cyclic}, the latter after restricting to automorphisms preserving $\hPaCD$.

\medskip 
Theorem~\ref{thmB}(2) makes explicit the cyclic compatibility condition from \cite[Proposition 5.2]{willwacher2024cyclic}. 
We show it follows from applying the framed reflection twice to the pentagon and precisely characterise the subalgebras where the reflection stays unframed.

\medskip 

Section~\ref{sec:prelim} recalls the framework, Section~\ref{sec:A} proves Theorem~\ref{thmA}, Section~\ref{sec:arity4} proves Theorem~\ref{thmB}(1) and interprets it as a reflected pentagon, and Section~\ref{sec:cyclic} completes the proof of Theorem~\ref{thmB}(2).

\subsection*{Conventions} Throughout, $\kk$ is a field of characteristic zero, Lie algebras are completed for the weight grading, and $\calF^k$ is the part of weight at least $k$. Inputs of an arity-$n$ operation are labelled $0,\dots,n-1$ and the output $n$. The labels of \cite{singh2026cyclic,willwacher2024cyclic} are recovered by $i\mapsto i+1\pmod{n+1}$. Morphisms are composed from left to right.

\subsection*{Acknowledgements}The author thanks H. Furusho for suggesting the extension to $\ASTR$ and for his talk at the seminar \textit{Johnson homomorphisms and related topics} in July 2026,  which motivated the connection between the flip construction and the author's prior work on cyclic operads. The author also acknowledges the support of the SMRI–AustMS Lift-Off Fellowship.

\section{Preliminaries}\label{sec:prelim}

This section fixes notation and reviews the framed Drinfeld--Kohno operad $\fft$ and its cyclic structure, the group $\GRT_1$ and the flip of \cite{FHK}, and the cyclic operad $\hPaRCD$ with its automorphisms. 

\subsection{The framed Drinfeld--Kohno operad and its cyclic structure}\label{subsec:ft}

\begin{definition}\label{def:ft}
For $n\geq1$, the \emph{framed Drinfeld--Kohno Lie algebra} $\fft_n$ is the completed Lie algebra generated by $T_{ij}=T_{ji}$, $0\leq i,j\leq n-1$, subject to
\begin{equation}\label{eq:ft-relations}
\begin{aligned}
[T_{ij},T_{kl}]&=0 &&\text{for }\{i,j\}\cap\{k,l\}=\varnothing,\\
[T_{ij},T_{ki}+T_{kj}]&=0 &&\text{for distinct }i,j,k,\\
[T_{ii},T_{jk}]&=0 &&\text{for all }i,j,k.
\end{aligned}
\end{equation}
The \emph{Drinfeld--Kohno Lie algebra} $\ft_n$ is generated by $t_{ij}=t_{ji}$, $0\le i\ne j\le n-1$, subject to the first two families of relations. We put
\[
c_n=\sum_{0\le i<j<n}t_{ij}\in\ft_n,\qquad \fd_n=\cl{[\ft_n,\ft_n]}=\calF^2\ft_n .
\]
\end{definition}

The element $c_n$ is central in $\ft_n$. Let $s\colon\ft_n\hookrightarrow\fft_n$, $s(t_{ij})=T_{ij}$, and $q\colon\fft_n\twoheadrightarrow\ft_n$, $q(T_{ij})=t_{ij}$ for $i\ne j$ and $q(T_{ii})=0$. The framings $T_{ii}$ occur in \eqref{eq:ft-relations} only through $[T_{ii},T_{kl}] = 0 = [T_{ij},T_{kk}]$ and the third family of relations, so
\begin{equation}\label{eq:framing-splitting}
\fft_n=s(\ft_n)\oplus\calZ_n,\qquad \calZ_n=\bigoplus_{0\le i<n}\kk T_{ii},
\end{equation}
is a decomposition of Lie algebras with $\calZ_n$ central, and $q$ is the projection onto the first summand. Correspondingly, $\UU(\fft_n)=s\,\UU(\ft_n)\,\widehat\otimes\,\kk[[T_{00},\dots,T_{n-1,n-1}]]$.

The Lie algebras $\fft=(\fft_n)_{n\ge1}$ form an operad in complete Lie algebras, with monoidal product $\oplus$ \cite{severa2009formality}, \cite[Section 1]{singh2026cyclic}, \cite[Section 5.1]{willwacher2024cyclic}. The partial composition $\circ_a\colon\fft_m\oplus\fft_k\to\fft_{m+k-1}$, $0\le a<m$, replaces input $a$ by the block $B=\{a,\dots,a+k-1\}$. It is given by $0\circ_aT_{ij}=T_{i+a,j+a}$ for $0\le i,j<k$ and, for $0\le i<j<m$,
\begin{equation}\label{eq:cabling}
\begin{gathered}
T_{ij}\circ_a0=
\begin{cases}
T_{i+k-1,\,j+k-1}&a<i,\\
\sum_{p\in B}T_{p,\,j+k-1}&a=i,\\
T_{i,\,j+k-1}&i<a<j,\\
\sum_{p\in B}T_{ip}&j=a,\\
T_{ij}&j<a,
\end{cases}
\\[1ex]
T_{ii}\circ_a0=
\begin{cases}
T_{i+k-1,\,i+k-1}&a<i,\\
\sum_{p\in B}T_{pp}+2\sum_{p<p'\in B}T_{pp'}&a=i,\\
T_{ii}&i<a.
\end{cases}
\end{gathered}
\end{equation}
The same formulas without framings make $\ft=(\ft_n)_{n\ge1}$ an operad, and $s\colon\ft\to\fft$ is a morphism of operads. Applying $\UU$ levelwise yields operads $\UU(\ft)$ and $\UU(\fft)$ in complete Hopf algebras.

\begin{remark}[Spherical presentation]\label{rem:spherical}
Put
\begin{equation}\label{eq:output}
T_{ni}=T_{in}=-\sum_{j<n}T_{ji}\quad(0\le i<n),\qquad T_{nn}=\sum_{i,j<n}T_{ij}=\sum_{i<n}T_{ii}+2\,s(c_n).
\end{equation}
Then $\fft_n$ is presented by generators $T_{ij}=T_{ji}$, $0\le i,j\le n$, subject to $[T_{ij},T_{kl}]=0$ for $\{i,j\}\cap\{k,l\}=\varnothing$, centrality of every $T_{ii}$, and $\sum_{j=0}^nT_{ij}=0$ for $0\le i\le n$ \cite[\S5.1]{willwacher2024cyclic} (compare \cite[Section5.3]{ihara_braids} and \cite[Definition~3.3]{singh2026cyclic}).
\end{remark}

The group $\Sigma_{n+1}$ of permutations of $\{0,\dots,n\}$ acts on $\fft_n$ by $P_\sigma(T_{ij})=T_{\sigma(i)\sigma(j)}$ in the presentation of Remark~\ref{rem:spherical}, and hence on $\UU(\fft_n)$ and $\exp(\fft_n)$. This makes $\fft$ a cyclic operad in complete Lie algebras \cite{severa2009formality,campos2019configuration}, \cite[\S5.1]{willwacher2024cyclic}; on $\Sigma_n\subset\Sigma_{n+1}$ it is the usual action by permutation of inputs. We use two elements of $\Sigma_{n+1}$: the transposition $\tau_n=(0\,n)$ and the reflection $\rho_n(i)=n-i$. By \eqref{eq:output}, for $0\le i,j<n$,
\begin{equation}\label{eq:tau}
\begin{gathered}
P_{\tau_n}(T_{ij})=
\begin{cases}
T_{ij}& i,j\ne0,\\
-\sum_{k<n}T_{kj}& i=0\ne j,\\
\sum_{k,l<n}T_{kl}& i=j=0,
\end{cases}
\\[1ex]
P_{\rho_n}(T_{ij})=
\begin{cases}
T_{n-i,\,n-j}& 0<i,j<n,\\
-\sum_{k<n}T_{k,\,n-j}& i=0<j,\\
\sum_{k,l<n}T_{kl}& i=j=0.
\end{cases}
\end{gathered}
\end{equation}
Since $\Sigma_n$ and $\tau_n$ generate $\Sigma_{n+1}$, the cyclic structure is determined by the operators $P_{\tau_n}$. Under the relabelling $i\mapsto i+1\pmod{n+1}$, $\tau_n$ becomes the transposition exchanging the output $0$ with the input $1$, which is the operator denoted $z^*$ in \cite[Sections~1--3]{singh2026cyclic} and $\tau=(01)$ in \cite[\S5]{willwacher2024cyclic}.

\subsection{The graded Grothendieck--Teichm\"uller group}\label{subsec:grt}

Let $\hff=\hff(x,y)$ be the completed free Lie algebra on $x$ and $y$, so that $\UU(\hff)=\kk\langle\!\langle x,y\rangle\!\rangle$, and let $\hff'=\cl{[\hff,\hff]}=\calF^2\hff$. For $\psi\in\UU(\hff)$ and elements $u,v$ of the augmentation ideal of a complete algebra, $\psi(u,v)$ denotes the image of $\psi$ under the continuous algebra map $x\mapsto u$, $y\mapsto v$. For pairwise disjoint sets $A,B,C$ of strands put $t_{AB}=\sum_{a\in A,b\in B}t_{ab}$ and $\psi^{A,B,C}=\psi(t_{AB},t_{BC})$ \cite[(1.51)]{FHK}. In $\ft_4$ we use
\begin{equation}\label{eq:five}
\begin{aligned}
\psi^{0,1,2}&=\psi(t_{01},t_{12}),&\psi^{0,12,3}&=\psi(t_{01}+t_{02},\,t_{13}+t_{23}),&\psi^{1,2,3}&=\psi(t_{12},t_{23}),\\
\psi^{01,2,3}&=\psi(t_{02}+t_{12},\,t_{23}),&\psi^{0,1,23}&=\psi(t_{01},\,t_{12}+t_{13}),
\end{aligned}
\end{equation}
and write
\[
\begin{gathered}
S_g=g^{0,1,2}g^{0,12,3},\quad S'_g=g^{01,2,3}g^{0,1,23},\\
S_\psi=\psi^{0,1,2}+\psi^{0,12,3},\quad S'_\psi=\psi^{01,2,3}+\psi^{0,1,23}.
\end{gathered}
\]
The \emph{pentagon equation} for a group-like $g\in\exp(\hff)$ and its linearisation for $\psi\in\hff$ are
\begin{equation}\label{eq:P}
\text{(P)}\quad S'_g=S_g\,g^{1,2,3}\ \text{ in }\UU(\ft_4),\qquad\qquad
\text{(P$'$)}\quad S'_\psi=S_\psi+\psi^{1,2,3}\ \text{ in }\ft_4 .
\end{equation}

\begin{definition}\label{def:grt}
The \emph{graded Grothendieck--Teichm\"uller group} and its Lie algebra are
\[
\begin{gathered}
\GRT_1=\bigl\{g\in\exp(\calF^3\hff):\ g\text{ satisfies (P)}\bigr\},\\
\lgrt_1=\bigl\{\psi\in\calF^3\hff:\ \psi\text{ satisfies (P$'$)}\bigr\}.
\end{gathered}
\]
\end{definition}

\begin{remark}\label{rem:grt-drinfeld}
A group-like $g$ satisfying (P) has no linear terms \cite[Lemma 75(1)]{FHK}, so Definition~\ref{def:grt} agrees with \cite[Definition~73(3)]{FHK}. The map $g\mapsto g^{-1}$ carries \textup{(P)} to Drinfeld's pentagon as written in \cite{Drin,Furusho_pentagon}, and by \cite[Lemmas~6 and~9, Theorem~3]{Furusho_pentagon} it identifies $\GRT_1$ with Drinfeld's group \cite[\S5]{Drin}. Likewise $\lgrt_1$ is Drinfeld's Lie algebra. 
% $\GRT_1$ is a group under $g\circledast h=g\,h(g^{-1}xg,y)$ \cite[(2.2)]{FHK}.
\end{remark}

Following \cite[Definition~73]{FHK}, let $\ASTR$ be the set of $\varphi\in\exp(\hff)$ satisfying \textup{(P)}, including those with vanishing quadratic part. By \cite[Theorem~1]{Furusho_pentagon}, $\ASTR$ is the set of Drinfeld associators. Moreover $\ASTR\subset\exp(\hff')$ \cite[Lemma~75(1)]{FHK} and $\GRT_1=\ASTR\cap\exp(\calF^3\hff)$.

% \begin{definition}[{\cite[Definition~73, Lemma~75(1)]{FHK}}]
% let $\ASTR$ be the set of $\varphi\in\exp(\hff)$ satisfying \textup{(P)}, the set of Drinfeld associators. Then $\ASTR\subset\exp(\hff')$, and $\GRT_1=\ASTR\cap\exp(\calF^3\hff)$.   
% \end{definition}

\begin{remark}[Normalisation]\label{rem:normalisation}
The restriction to $\calF^3$ in Definition~\ref{def:grt} cannot be dropped. For $\psi=[x,y]$ one computes in $\ft_4$
\[
S'_\psi-S_\psi-\psi^{1,2,3}=-[t_{01},t_{23}]-[t_{02},t_{13}]=0,
\]
so $[x,y]$ satisfies (P$'$), whereas $[x,y]+[y,z]+[z,x]=3[x,y]$ for $z=-x-y$, so it violates the linearised three-cycle relation; compare \cite[\S1]{Furusho_pentagon}. Since (P$'$) is homogeneous for the weight grading and $\calF^2\hff/\calF^3\hff=\kk \cdot [x,y]$, it follows that
\[
\bigl\{\psi\in\hff':\ \psi\text{ satisfies (P$'$)}\bigr\}=\kk\cdot [x,y]\oplus\lgrt_1 .
\]
In particular, the set defined in \cite[Definition~73(4)]{FHK} by the Lie-likeness condition and the linearised pentagon equation alone is $\kk\cdot [x,y]\oplus\lgrt_1$, and not $\lgrt_1$.
\end{remark}

\subsection{The dagger subalgebra and the flip}\label{subsec:flip}

For $0<j<n$, let $e_{0j}\colon\UU(\ft_n)\to\kk$ be the continuous linear form extracting the coefficient of $t_{0j}$ in the weight-one component, and put $\partial_{0j}=(e_{0j}\otimes\id)\circ\Delta$. Since $e_{0j}(uv)=e_{0j}(u)\varepsilon(v)+\varepsilon(u)e_{0j}(v)$, each $\partial_{0j}$ is a continuous derivation, with $\partial_{0j}(t_{ab})=\delta_{\{a,b\},\{0,j\}}$, and the $\partial_{0j}$ commute because $\Delta$ is cocommutative. The \emph{dagger subalgebra} is
\[
\UU(\ft_n)^\dagger=\bigcap_{j=1}^{n-1}\ker\partial_{0j},
\]
a complete Hopf subalgebra of $\UU(\ft_n)$ \cite[Definition~59, Lemma~61]{FHK}. The \emph{flip} is the involution given on generators by
\begin{equation}\label{eq:flip}
F_n(t_{ij})=t_{n-i,\,n-j}\quad(0<i,j<n),\quad
F_n(t_{0j})=-\sum_{k<n}t_{k,\,n-j}\quad(0<j<n),
\end{equation}
with the convention $t_{kk}=0$ \cite[Lemma~60]{FHK}. In \cite{FHK} it is considered on $\UU(\ft_n)^\dagger$. The formulas \eqref{eq:flip} define a continuous automorphism of $\ft_n$, and hence of $\UU(\ft_n)$, preserving $\UU(\ft_n)^\dagger$ (Proposition~\ref{prop:reflection} and Corollary~\ref{cor:dagger}).

% By \cite[Theorem~78]{FHK}, a series $g\in\exp(\hff)$ with vanishing coefficient of $x$ satisfies $F_4(S_g)=S_g\,h^{1,2,3}$ for some $h\in\exp(\hff')$ if and only if $g=\varphi\exp(\alpha y)$ with $\varphi$ satisfying (P) and $\alpha\in\kk$. In the Lie case the corresponding solution space is $\kk y\oplus\kk[x,y]\oplus\lgrt_1$, by Remark~\ref{rem:normalisation}.

\subsection{Parenthesized ribbon chord diagrams}\label{subsec:parcd}

The \emph{magma operad} $\magma$ is the free symmetric operad in sets on one binary operation with free $\Sigma_2$-action. An element of $\magma(n)$ is a planar rooted binary tree with leaves labelled bijectively by $0,\dots,n-1$ and root labelled $n$, written as a parenthesized word such as $((01)2)$. Forgetting the root, an element of $\magma(n)$ is a trivalent tree with $n+1$ labelled flags and a cyclic order of the half-edges at each vertex. The group $\Sigma_{n+1}$ acts on $\magma(n)$ by relabelling the flags, $i\mapsto\sigma(i)$, keeping the cyclic orders at the vertices. We then re-root at the flag labelled $n$ and read the leaves in the cyclic order following the root. This extends the action of $\Sigma_n$ and makes $\magma$ a cyclic operad in sets.

\begin{definition}\label{def:parcd}
For $n\ge1$, let $\hPaRCD(n)$ be the groupoid with object set $\magma(n)$, with $\Hom(p,q)=\exp(\fft_n)$ for all $p,q$, and with composition given by multiplication: the composite of $u\colon p\to q$ followed by $v\colon q\to r$ is $uv\colon p\to r$. We write a morphism as $(u\colon p\to q)$ and call $u$ its \emph{coefficient}. The operad structure is that of $\magma$ on objects and that of $\UU(\fft)$ on coefficients, and $\sigma\in\Sigma_{n+1}$ acts by $\sigma$ on objects and by $P_\sigma$ on coefficients. This is the cyclic operad of \emph{parenthesized ribbon chord diagrams} \cite[\S1]{singh2026cyclic}, \cite[\S5.1]{willwacher2024cyclic}.
\end{definition}

For $p\in\magma(n)$, the automorphism group of $p$ is the prounipotent group $\exp(\fft_n)$, with Lie algebra $\fft_n$. We call the elements
\[
\bI=T_{00}\ \text{at }(0),\qquad \bH=T_{01}\ \text{at }(01)
\]
of these Lie algebras the \emph{framing} and the \emph{chord}, and consider the \emph{braiding} and \emph{associativity} morphisms
\[
\bX=(1\colon(01)\to(10)),\qquad \bA=(1\colon((01)2)\to(0(12))).
\]
Every morphism of $\hPaRCD$ is obtained from $\exp(\kk\bI)$, $\exp(\kk\bH)$, $\bX$ and $\bA$ by categorical and operadic composition, inverses and completion \cite{BN}, \cite[\S10.2]{FresseBook1}, \cite[\S1]{singh2026cyclic}. For $\sigma\in\Sigma_3$ with $(a,b,c)=(\sigma(0),\sigma(1),\sigma(2))$ we write $A_{abc}=P_\sigma\bA=(1\colon((ab)c)\to(a(bc)))$, so that $\bA=A_{012}$. Since hom-sets are constant, a morphism is determined by its source, its target and its coefficient. In particular $(1\colon p\to q)^{-1}=(1\colon q\to p)$.

\begin{lemma}\label{lem:tau-generators}
The transpositions $\tau_n$ act on the generators by
\[
P_{\tau_1}\bI=\bI,\qquad P_{\tau_2}\bH=\bigl(-T_{01}-T_{11}\ \text{at }(10)\bigr),\qquad P_{\tau_2}\bX=\bX^{-1},\qquad P_{\tau_3}\bA=A_{120}^{-1}.
\]
\end{lemma}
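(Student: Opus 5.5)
The plan is a direct verification, generator by generator, from the definitions. Each of $\bI,\bH,\bX,\bA$ is specified by a source, a target and a coefficient. The action of $\tau_n$ in Definition~\ref{def:parcd} acts on objects through the cyclic structure of $\magma$ and on coefficients through $P_{\tau_n}$ as given in \eqref{eq:tau}. Since hom-sets are constant, it suffices in each case to compute the image of the coefficient and the images of the source and target trees.

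Coefficients first. For $n=1$, \eqref{eq:tau} gives $P_{\tau_1}(T_{00})=\sum_{k,l<1}T_{kl}=T_{00}$. For $n=2$, the case $i=0\ne j=1$ of \eqref{eq:tau} gives $P_{\tau_2}(T_{01})=-\sum_{k<2}T_{k1}=-T_{01}-T_{11}$. For $\bX$ and $\bA$ the coefficient is $1$, and it is fixed by the algebra automorphism $P_{\tau_n}$. So only the objects remain.

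For objects I will use the description of $\Sigma_{n+1}$ acting on $\magma(n)$: relabel flags, keep the cyclic orders at the trivalent vertices, re-root at the flag labelled $n$, and read leaves in the cyclic order following the root. For $n=1$ the unique object $(0)$ is fixed. For $n=2$, the object $(01)$ has cyclic order $(2,0,1)$ at its vertex. Under $0\leftrightarrow2$ this becomes $(0,2,1)$, which is cyclically $(2,1,0)$, so $(01)\mapsto(10)$. Likewise $(10)$, with order $(2,1,0)$, goes to $(0,1,2)\sim(2,0,1)$, so $(10)\mapsto(01)$. Hence $\bH$ at $(01)$ becomes $-T_{01}-T_{11}$ at $(10)$, and $P_{\tau_2}\bX=(1\colon(10)\to(01))=\bX^{-1}$.

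The arity-three case is where care is needed, and I expect it to be the only real obstacle: I must track the re-rooting correctly. Write $((01)2)$ as a root vertex with cyclic order $(3,v,2)$ and an inner vertex $v$ with order $(e,0,1)$, where $e$ is the edge towards the root. Swapping $0\leftrightarrow3$ puts the new root flag $3$ at the former inner vertex. Re-rooting there and reading cyclically after $3$ gives the leaf $1$, then the subtree containing $2$ and $0$ in that order. This produces $(1(20))$. Similarly, $(0(12))$ has root order $(3,0,w)$ and $w$ with order $(e,1,2)$. After the swap the root flag is directly on the old root vertex, with order $(0,3,w)$, so reading after $3$ gives $w$, that is $(12)$, and then $0$. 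This produces $((12)0)$. Thus $P_{\tau_3}\bA=(1\colon(1(20))\to((12)0))$. By the notation $A_{120}=(1\colon((12)0)\to(1(20)))$ and the rule $(1\colon p\to q)^{-1}=(1\colon q\to p)$, this equals $A_{120}^{-1}$. I would double-check these tree manipulations against the picture of the tree as an unrooted trivalent graph with planar structure.
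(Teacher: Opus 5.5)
Your proof is correct and follows the same route as the paper: coefficients via \eqref{eq:tau}, and objects by relabelling the flags and re-rooting at the output. Your tree computations $((01)2)\mapsto(1(20))$ and $(0(12))\mapsto((12)0)$ use the same conventions as the paper's worked example $((10)2)\mapsto((20)1)$ in the proof of Proposition~\ref{prop:arity3}, so $P_{\tau_3}\bA=A_{120}^{-1}$ follows.
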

This is the computation of \cite[\S1]{singh2026cyclic} and \cite[Proposition~5.2]{willwacher2024cyclic} in our labels: on coefficients use \eqref{eq:tau}, and on objects relabel the flags and re-root at the output.

\subsection{Automorphisms of \texorpdfstring{$\hPaRCD$}{PaRCD}}\label{subsec:aut}

Let $\Aut^+_{\Op}(\hPaRCD)$ be the group of automorphisms of the operad $\hPaRCD$ which are the identity on
objects. All automorphisms are taken in prounipotent groupoids over $\kk$, so that their induced Lie algebra maps are continuous and $\kk$-linear. Each $\Theta \in \Aut^+_{\Op}(\hPaRCD)$ induces, for every $p\in\Omega(n)$, a continuous automorphism of $\Aut(p)=\exp(\fft_n)$ and hence a continuous $\kk$-linear automorphism of its Lie algebra $\fft_n$. 
% We say that $\Theta$ \emph{fixes the framing and the chord} if $\Theta(\bI)=\bI$ and $\Theta(\bH)=\bH$ in these Lie algebras.

The inclusions $s: \ft_n \rightarrow \fft_n$ defined by $t_{ij} \mapsto T_{ij}$, identify the \textit{operad of parenthesised chord diagrams} $\hPaCD$ with a suboperad of $\hPaRCD$, having the same objects and unframed morphisms. There is a well-known isomorphism $\GRT_1 \cong \Aut^+_{\Op,1}(\hPaCD)$ (\cite{BN}, \cite[Section 10.3]{FresseBook1}). Here $\Aut^+_{\Op,1}(\hPaCD)$ denotes the object-fixing automorphisms fixing $\bH$ and $\bX$.
% Let $\Aut^+_{\cyc,1}(\hPaRCD)$ be the subgroup of those $\Theta\in\Aut^+_{\Op,1}(\hPaRCD)$ with
% $\Theta\circ P_\sigma=P_\sigma\circ\Theta$ on $\hPaRCD(n)$ for all $n\ge1$ and $\sigma\in\Sigma_{n+1}$.

We write 
\[
\begin{aligned}
\Aut^+_{\Op,1}(\hPaRCD)
:=\{\Theta\in\Aut^+_{\Op}(\hPaRCD):\;&
\Theta(\exp(s\bI))=\exp(s\bI),\,
\Theta(\exp(s\bH))=\exp(s\bH), \\ \quad
\Theta(\bX)=\bX,
&\Theta(\hPaCD)=\hPaCD, \, \text{for all} \, s \in \kk\},
\end{aligned}
\] 
and
\begin{multline*}
\Aut^+_{\cyc,1}(\hPaRCD) :=\bigl\{\Theta\in\Aut^+_{\Op,1}(\hPaRCD):
\Theta\circ P_\sigma=P_\sigma\circ\Theta\ \text{for all }\sigma\in\Sigma_{n+1},\ n\ge1\bigr\}.
\end{multline*}

\begin{thm}\label{thm:classification}
For every $g\in\GRT_1$ there is a unique $\Theta_g\in\Aut^+_{\Op,1}(\hPaRCD)$ with $\Theta_g(\bA)=g(T_{01},T_{12})\,\bA$, and $g\mapsto\Theta_g$ is a bijection $\GRT_1\to\Aut^+_{\Op,1}(\hPaRCD)$.
\end{thm}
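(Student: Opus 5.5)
The plan is to reduce to the classical classification $\GRT_1\cong\Aut^+_{\Op,1}(\hPaCD)$ of Bar-Natan and Fresse. The reduction uses the splitting \eqref{eq:framing-splitting}, $\fft_n=s(\ft_n)\oplus\calZ_n$ with $\calZ_n$ central, together with the fact that $\hPaRCD$ is generated by $\exp(\kk\bI)$, $\exp(\kk\bH)$, $\bX$ and $\bA$.

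\emph{Uniqueness and injectivity.} Let $\Theta\in\Aut^+_{\Op,1}(\hPaRCD)$. It fixes $\bI$, $\bH$ and $\bX$, and it is the identity on objects. Since $\Theta$ preserves $\hPaCD$, the image $\Theta(\bA)$ is a morphism $((01)2)\to(0(12))$ with unframed coefficient, so $\Theta(\bA)=u\,\bA$ with $u\in s\exp(\ft_3)$. Because the generators determine every morphism, $\Theta$ is determined by $u$. This already gives uniqueness of $\Theta_g$ and injectivity of $g\mapsto\Theta_g$, once $u$ is shown to have the form $g(T_{01},T_{12})$.

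\emph{Existence.} Given $g\in\GRT_1$, the classical theorem provides $\Theta^0_g\in\Aut^+_{\Op,1}(\hPaCD)$ with $\bA\mapsto g(t_{01},t_{12})\bA$. Let $\theta_n$ be the induced continuous Lie automorphism of $\ft_n$. Extend it to $\fft_n$ by $\theta_n\oplus\id_{\calZ_n}$, and act as the identity on objects. Compatibility with the Lie brackets is immediate since $\calZ_n$ is central and a direct summand. The substantive check is compatibility with the operadic compositions \eqref{eq:cabling}, in which framings mix with chords: $T_{ii}\circ_a0$ contains $2\sum_{p<p'\in B}T_{pp'}$. We need
\[
\theta_{m+k-1}\bigl(T_{ii}\circ_a0\bigr)=T_{ii}\circ_a0,
\]
that is, $\theta$ must fix $t_{BB}=\sum_{p<p'\in B}t_{pp'}$. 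This element is the image of the central element $c_k$ under the operadic insertion $0\circ_a$. So it suffices to know that $\theta_k(c_k)=c_k$ for all $k$.

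For $k=2$, $c_2=t_{01}$ is the chord, which is fixed by hypothesis. For general $k$, write $c_k$ as an operadic cabling of $t_{01}$: cabling the strands of $t_{01}$ into blocks gives $c_k=t_{B_1B_2}+c_{B_1}+c_{B_2}$, and induction on $k$ finishes the argument. The same inductive argument also shows that the extension respects the insertions $0\circ_aT_{ij}$ and $T_{ij}\circ_a0$ for $i\ne j$, since these are exactly the unframed operad maps that $\Theta^0$ already respects. With compositions checked, $\Theta_g$ is a well-defined automorphism fixing $\bI$, $\bH$, $\bX$ and $\hPaCD$.

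\emph{Surjectivity.} Given $\Theta\in\Aut^+_{\Op,1}(\hPaRCD)$, its restriction to $\hPaCD$ lies in $\Aut^+_{\Op,1}(\hPaCD)$ and so equals $\Theta^0_g$ for some $g\in\GRT_1$. Then $\Theta(\bA)=g(T_{01},T_{12})\bA$, and uniqueness gives $\Theta=\Theta_g$.

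The main obstacle is the existence step. It requires the framed composition formula to be preserved, which amounts to showing that $\Theta$ fixes the cabled chords $t_{BB}$. This also requires some care with the group-level formulation, since $\Theta$ fixes $\exp(s\bI)$ for all $s\in\kk$, which must be passed to the Lie level. I would handle it by the induction via cabling sketched above.
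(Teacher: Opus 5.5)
Your proposal is correct and follows essentially the same route as the paper. The steps match: injectivity from the generators, reduction to the Bar-Natan--Fresse classification on $\hPaCD$, extension by the identity on $\calZ_n$ via the splitting, and the key check that $\exp(\lambda s(c_k))$ is fixed, proved by induction by cabling the chord (the paper uses $\exp(\lambda s(c_k))=\exp(\lambda\bH)\circ_0\exp(\lambda s(c_{k-1}))$). The one point you leave implicit, and which the paper states, is that the induced automorphism of $\ft_k$ depends on the object $p\in\magma(k)$ only up to conjugation. Centrality of $c_k$ is what lets the induction at objects of the form $(p'(k-1))$ cover every object, and hence the factor $1_q\circ_a\exp(2\mu\, s(c_k))$ appearing in framed composites.
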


\begin{proof}
Restriction to $\hPaCD$ is injective, as $\hPaRCD$ is generated by $\exp(\kk\bI)$, $\exp(\kk\bH)$, $\bX$ and $\bA$. By \cite{BN}, \cite[Section 10.3]{FresseBook1}, the restriction is the automorphism $G_g$ of $\hPaCD$ with $G_g(\bX)=\bX$ and $G_g(\bA)=g(T_{01},T_{12})\bA$, for a unique $g\in\GRT_1$. Conversely, $G_g$ fixes $\exp(\lambda s(c_k))$ at every
$p\in\magma(k)$. Since $c_k$ is central, it suffices to take $p=(p'(k-1))$, where 
$\exp(\lambda s(c_k))=\exp(\lambda\bH)\circ_0\exp(\lambda s(c_{k-1}))$ by \eqref{eq:cabling}, so the claim follows by induction on $k$.
Using \eqref{eq:framing-splitting}, extend $G_g$ by the identity on $\mathcal{Z}_n$. By \eqref{eq:cabling},
\[
\big(s(v)e^{z}\big)\circ_a\big(s(w)e^{z'}\big)
=\big(s(v)\circ_a s(w)\big)\big(0\circ_a e^{2\mu_a s(c_k)}\big)e^{z''},
\]
where $\mu_a$ is the coefficient of $T_{aa}$ in $z$ and $z''\in Z$. Hence, the extension
is an operad automorphism preserving $\hPaCD$, and it is $\Theta_g$.
\end{proof}

Theorem~\ref{thm:classification} is a framed version of \cite{BN}, \cite[Section 10.3]{FresseBook1}. Allowing $\bI$ and $\bH$ to be rescaled by a common $\lambda \in \kk^{\times}$ gives $\GRT_1\rtimes\kk^\times$.

% Without the condition on $\bH$ one obtains $\GRT_1\rtimes\kk^\times$, with $\kk^\times$ rescaling chords.

The five bracketings of the word $0123$ are
\begin{equation}\label{eq:vertices}
\begin{gathered}
p_0=(((01)2)3),\quad p_1=((0(12))3),\quad p_2=(0((12)3)),\\
p_3=(0(1(23))),\quad p_4=((01)(23)).
\end{gathered}
\end{equation}
Let $1_\mu$ be the identity of $(01)\in\magma(2)$, and define the five coefficient-one morphisms
\begin{equation}\label{eq:cabledA}
\begin{aligned}
\bA^{0,1,2}&=1_\mu\circ_0\bA\colon p_0\to p_1,&
\bA^{0,12,3}&=\bA\circ_11_\mu\colon p_1\to p_2,\\
\bA^{1,2,3}&=1_\mu\circ_1\bA\colon p_2\to p_3,&
\bA^{01,2,3}&=\bA\circ_01_\mu\colon p_0\to p_4,\\
\bA^{0,1,23}&=\bA\circ_21_\mu\colon p_4\to p_3.
\end{aligned}
\end{equation}

\begin{lemma}\label{lem:pentagonPaRCD}
Let $\Theta$ be an object-fixing continuous operad endomorphism of $\hPaRCD$ with $\Theta(\bA)=g(T_{01},T_{12})\bA$ for some $g\in\exp(\hff)$. Then $\Theta(\bA^B)=s(g^B)\,\bA^B$ for each of the five labels $B$ in \eqref{eq:cabledA}, and $g$ satisfies \textup{(P)}.
\end{lemma}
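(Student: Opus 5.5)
The plan has two steps. First, compute $\Theta$ on each of the five cabled morphisms using compatibility with operadic composition. Second, use the pentagon relation among the cabled associators in $\hPaRCD$ to force (P).

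For the first step, each $\bA^B$ is a partial composite of $\bA$ with the identity $1_\mu$ of $(01)$. Since $\Theta$ is object-fixing and a functor, it sends identities to identities, so $\Theta(1_\mu)=1_\mu$. Operadic compatibility then gives, for instance,
\[
\Theta(\bA^{0,12,3})=\Theta(\bA)\circ_1 1_\mu=\bigl(g(T_{01},T_{12})\circ_1 0\bigr)\,\bA^{0,12,3}.
\]
The composition on coefficients is that of $\UU(\fft)$, and the coefficient of $1_\mu$ is $1$, so the new coefficient is the image of $g(T_{01},T_{12})$ under the continuous algebra map $\circ_a$. By \eqref{eq:cabling}, applied only to off-diagonal generators $T_{ij}$ with $i<j$ (no framings $T_{ii}$ occur in $g(T_{01},T_{12})$), this map sends
\[
T_{01}\mapsto T_{01}+T_{02},\qquad T_{12}\mapsto T_{13}+T_{23}
\]
for $\circ_1$ with $k=2$. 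The other four labels work the same way. For example, $1_\mu\circ_1\bA$ uses the rule $0\circ_1 T_{ij}=T_{i+1,j+1}$, giving $g(T_{12},T_{23})$. The remaining cases $\circ_0$, $\circ_2$ and $1_\mu\circ_0$ are checked against \eqref{eq:five} the same way. In each case the image is $s(g^B)$, because $s$ is an operad morphism and $g^B=g(t_{AB},t_{BC})$ is defined by the same cabling in $\ft$.

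For the second step, I would use the pentagon in $\hPaRCD(4)$. All five $\bA^B$ have coefficient $1$, so the composites
\[
\bA^{0,1,2}\bA^{0,12,3}\bA^{1,2,3}\colon p_0\to p_3\quad\text{and}\quad\bA^{01,2,3}\bA^{0,1,23}\colon p_0\to p_3
\]
both have coefficient $1$. Because hom-sets are constant, a morphism is determined by its source, target and coefficient, so the two composites are equal. Applying the functor $\Theta$ (composition is left to right, and coefficients multiply) and using the first step gives
\[
s(g^{0,1,2})\,s(g^{0,12,3})\,s(g^{1,2,3})=s(g^{01,2,3})\,s(g^{0,1,23})
\]
in $\exp(\fft_4)$. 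Finally, $s$ is injective, since $\fft_n=s(\ft_n)\oplus\calZ_n$ by \eqref{eq:framing-splitting}. Hence $S_g\,g^{1,2,3}=S'_g$ in $\UU(\ft_4)$, which is (P).

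The main obstacle is bookkeeping in the first step: checking each of the five cablings against the index conventions of \eqref{eq:cabling} and \eqref{eq:five}, in particular the block shifts $i+k-1$. A secondary point is to justify $\Theta(1_\mu)=1_\mu$ and that $\Theta$ acts on coefficients through the operadic composition as claimed. Both follow directly from $\Theta$ being an object-fixing operad map of groupoids whose composition on coefficients is that of $\UU(\fft)$.
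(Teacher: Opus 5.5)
Your proposal is correct and follows the paper's argument. You compute the five cabled coefficients from \eqref{eq:cabling} using operadic compatibility and $\Theta(1_\mu)=1_\mu$. You then apply $\Theta$ to Mac Lane's pentagon, which holds in $\hPaRCD(4)$ because both paths are coefficient-one morphisms $p_0\to p_3$, and conclude with the injectivity of $s$. This simply spells out the details the paper delegates to the cited references.
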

This is the cabling of the associator \cite{BN}, \cite[Section 10.2]{FresseBook1}: the coefficients follow from \eqref{eq:cabling}, and applying $\Theta$ to Mac Lane's pentagon $\bA^{0,1,2}\bA^{0,12,3}\bA^{1,2,3}=\bA^{01,2,3}\bA^{0,1,23}$ gives (P) in the left-to-right convention. For $g,h\in\GRT_1$, one has 
\begin{equation}\label{eq:grouplaw}
   \Theta_g\circ\Theta_h=\Theta_{g\circledast h} \quad \text{with} \quad  g\circledast h=h(x,gyg^{-1})\,g=g\,h(g^{-1}xg,y). 
\end{equation}
Indeed, $\Theta_g$ fixes $x=T_{01}$ at $((01)2)$, which is $1_\mu\circ_0\bH$, and $T_{12}$ at $(0(12))$,
which is $1_\mu\circ_1\bH$. Since $y=T_{12}$ at $((01)2)$ is the conjugate of the latter by $\bA$,
$\Theta_g(y)=gyg^{-1}$, from which comes $\Theta_g\Theta_h(\bA)=h(x,gyg^{-1})\,g\,\bA$ (compare \cite[(2.2)]{FHK}).
Hence $g\mapsto\Theta_g$ is a group isomorphism $(\GRT_1,\circledast)\cong\Aut^+_{\Op,1}(\hPaRCD)$.

% In particular $g\mapsto\Theta_g$ is a group isomorphism $(\GRT_1,\circledast)\cong\Aut^+_{\Op,1}(\hPaRCD)$. Indeed, $\Theta_g$ fixes $x=T_{01}$ in $((01)2)$ and $T_{12}$ in $(0(12))$, so it acts in $((01)2)$ by $x\mapsto x$, $y=T_{12}\mapsto gyg^{-1}$, from which comes $\Theta_g\Theta_h(\bA)=h(x,gyg^{-1})\,g\,\bA$ (compare \cite[(2.2)]{FHK}). 
% We note that for every \(g\in\GRT_1\), every \(n\), and every \(\sigma\in\Sigma_{n+1}\), $\Theta_{g,n}P_\sigma=P_\sigma\Theta_{g,n}$. Together with Theorem~\ref{thm:classification}, this implies 
% \[
% \Aut^+_{\cyc,1}(\hPaRCD)
% =
% \Aut^+_{\Op,1}(\hPaRCD)
% \cong\GRT_1.
% \]

We show in Section~\ref{sec:cyclic} that every $\Theta_g$ commutes with the cyclic action (Proposition~\ref{prop:cyclicity}).

\section{The flip as a framed reflection}\label{sec:A}

In this section, we prove Theorem~\ref{thmA}. On unframed elements the reflection $P_{\rho_n}$ agrees with the flip up to framings (Proposition~\ref{prop:reflection}), and we compute these framings explicitly (Proposition~\ref{prop:taylor}). The dagger subalgebra is the set where no framings appear, and the set that stays unframed under all of $\Sigma_{n+1}$ is $s\,\UU(\fd_n)$ (Section~\ref{subsec:stable-locus}).

\subsection{The framing defect}
The dagger subalgebra and its $\Sigma_{n+1}$-stable analogue are common kernels of derivations. We first prove the kernel lemma (Lemma~\ref{lem:kernel}) that covers both cases.

\begin{lemma}\label{lem:kernel}
Let $\fg$ be the weight completion of a graded Lie algebra generated in weight one, and let $\fk\subset\fg$ be a closed ideal with $\calF^2\fg\subset\fk$. Let $v_1,\dots,v_r\in\fg$ be of weight one with classes forming a basis of $\fg/\fk$, and let $e_1,\dots,e_r\colon\UU(\fg)\to\kk$ be the continuous linear forms which vanish outside weight one and satisfy $e_i(\fk)=0$ and $e_i(v_j)=\delta_{ij}$. Then $\partial_i=(e_i\otimes\id)\circ\Delta$ are commuting continuous derivations of $\UU(\fg)$, with $\partial_i(w)=e_i(w)$ for $w\in\fg$. Every $u\in\UU(\fg)$ has a unique expansion
\[
u=\sum_{\nu\in\N^r}v^\nu u_\nu,\qquad v^\nu=v_1^{\nu_1}\cdots v_r^{\nu_r},\quad u_\nu\in\UU(\fk),
\]
and $\partial_iu=\sum_\nu\nu_i\,v^{\nu-\varepsilon_i}u_\nu$, where $\varepsilon_i$ is the $i$-th unit vector. In particular
\[
\bigcap_{i=1}^r\ker\partial_i=\UU(\fk).
\]
\end{lemma}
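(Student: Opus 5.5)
We will prove Lemma~\ref{lem:kernel} by reducing it to PBW for a central-type extension at the level of coalgebras.

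\emph{Derivation properties.} First, each $e_i$ vanishes on the unit (weight zero) and on all weights $\ge2$. Since $\Delta$ is weight-preserving, for $u,w$ we have $e_i(uw)=e_i(u)\varepsilon(w)+\varepsilon(u)e_i(w)$: the weight-one part of $uw$ is $u_0w_1+u_1w_0$. Multiplicativity of $\Delta$ then gives
\[
\partial_i(uw)=\partial_i(u)w+u\partial_i(w).
\]
Cocommutativity and coassociativity give $\partial_i\partial_j=(e_j\otimes e_i\otimes\id)\Delta^{(2)}=\partial_j\partial_i$. For $w\in\fg$ primitive, $\partial_i w=e_i(w)\cdot1+e_i(1)w=e_i(w)$. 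Continuity is clear since $\partial_i$ lowers weight by exactly one on homogeneous pieces.

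\emph{Expansion.} Since $\fg=\spanlin\{v_1,\dots,v_r\}\oplus\fk$ as topological vector spaces (the weight-one $v_j$ span a complement because their classes are a basis and $\fk$ is closed), the PBW theorem gives a linear isomorphism
\[
\kk[[v_1,\dots,v_r]]\,\widehat\otimes\,\UU(\fk)\to\UU(\fg),\qquad v^\nu\otimes u\mapsto v^\nu u,
\]
in the completed (weight-graded) setting. We can check this degree by degree on the associated graded, where it is the usual PBW statement $\UU(\fg)\cong S(V)\otimes\UU(\fk)$ for a vector space decomposition $\fg=V\oplus\fk$. This gives existence and uniqueness of the expansion. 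Here $v^\nu$ is the ordered product, which is fine since we fix the ordering.

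\emph{Formula for $\partial_i$.} Because $\fk$ is an ideal with $e_i(\fk)=0$, every element of $\fk$ is killed by $\partial_i$. By the derivation property, $\partial_i$ then kills the subalgebra $\UU(\fk)$ it generates (closure by continuity). Applying the Leibniz rule to $v_1^{\nu_1}\cdots v_r^{\nu_r}u_\nu$ with $\partial_i v_j=\delta_{ij}$ (scalars, hence central) gives $\partial_i(v^\nu u_\nu)=\nu_i v^{\nu-\varepsilon_i}u_\nu$. Summing, using continuity, yields the stated formula.

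\emph{Kernel.} If all $\partial_i u=0$, then uniqueness of the expansion forces $\nu_iu_\nu=0$ for all $i,\nu$. Since the characteristic is zero, $u_\nu=0$ unless $\nu=0$, so $u=u_0\in\UU(\fk)$. The converse was shown above.

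The main obstacle is the careful completed PBW step. We must verify that the ordered-monomial map is a topological isomorphism. This is where $\calF^2\fg\subset\fk$ matters: it makes the complement $V$ purely weight one and homogeneous, so the filtration splits compatibly and the graded argument lifts to completions.
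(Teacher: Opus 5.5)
Your proof is correct and follows the same route as the paper: the $e_i$ are infinitesimal characters, so the $\partial_i$ are derivations; cocommutativity makes them commute; PBW for $\fg=\spanlin\{v_i\}\oplus\fk$ gives the expansion; and the Leibniz rule together with characteristic zero gives the formula for $\partial_i$ and the kernel. You supply more detail than the paper, notably that $\calF^2\fg\subset\fk$ makes $\fk$ a homogeneous ideal, so the graded PBW isomorphism passes weight by weight to the completions; the phrase ``central-type extension'' in your first line is a misnomer, but nothing in your argument relies on it.
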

\begin{proof}
The $e_i$ are infinitesimal characters, so the $\partial_i$ are derivations, commuting by cocommutativity. By the Poincar\'e--Birkhoff--Witt theorem for $\fg=\spanlin\{v_i\}\oplus\fk$, and since $\partial_i(\fk)=0$ and $\partial_i(v_j)=\delta_{ij}$, the Leibniz rule gives the formula for $\partial_iu$. If $\partial_iu = 0$, for every $i$, characteristic zero forces $u_\nu=0$ for $\nu\ne0$.
\end{proof}

For $\fg=\ft_n$, $\fk=\fh_n:=\fd_n\oplus\spanlin\{t_{ij}:0<i<j<n\}$ and $v_j=t_{0j}$, the forms $e_j$ are the $e_{0j}$ of Section~\ref{subsec:flip}, and $\fh_n$ is a closed ideal since it is the common kernel of the Lie characters $e_{0j}|_{\ft_n}$.

% \subsection{The reflection}

\begin{prop}\label{prop:reflection}
Let $n\ge2$.
\begin{enumerate}[label=\textup{(\arabic*)}]
\item For $0<i,j<n$,
\begin{equation}\label{eq:defect}
P_{\rho_n}s(t_{ij})=s F_n(t_{ij}),\qquad P_{\rho_n}s(t_{0j})=sF_n(t_{0j})-T_{n-j,\,n-j},
\end{equation}
where $F_n$ is given by \eqref{eq:intro-flip}.
\item The map $q\,P_{\rho_n}\,s$ is the continuous Lie algebra endomorphism of $\ft_n$ given by \eqref{eq:intro-flip}. Thus $F_n=q\,P_{\rho_n}\,s$ is well defined. It is an involutive automorphism of $\ft_n$, and $F_n(c_n)=-c_n$.
\item $P_{\rho_n}s=sF_n$ on $\fd_n$ and on $\UU(\fd_n)$.
\end{enumerate}
\end{prop}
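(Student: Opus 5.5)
Part (1) is a direct computation from \eqref{eq:tau}. For $0<i,j<n$ we have $P_{\rho_n}s(t_{ij})=T_{n-i,n-j}$, which is $sF_n(t_{ij})$ since $n-i,n-j$ are distinct and lie in $\{1,\dots,n-1\}$. For $i=0<j$, \eqref{eq:tau} gives
\[
P_{\rho_n}s(t_{0j})=-\sum_{k<n}T_{k,\,n-j}.
\]
In this sum the index $k$ runs over all of $0,\dots,n-1$, so it contains the diagonal term $k=n-j$. Separating that term produces $-\sum_{k<n,\,k\ne n-j}T_{k,n-j}-T_{n-j,n-j}$. The first part is $sF_n(t_{0j})$ under the convention $t_{kk}=0$. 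This gives \eqref{eq:defect}.

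For (2), I note that $P_{\rho_n}$ is an automorphism of $\fft_n$, and that $s$ and $q$ are Lie morphisms by the central splitting \eqref{eq:framing-splitting}. Hence $qP_{\rho_n}s$ is a continuous Lie endomorphism of $\ft_n$. By (1) it takes the values \eqref{eq:intro-flip} on generators, so $F_n$ is well defined without checking the relations by hand.

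For involutivity I would use $q\circ P_{\rho_n}\circ s\circ q=q\circ P_{\rho_n}$. This holds because $P_\sigma$ permutes the central framings $T_{ii}$ (it sends $T_{ii}$ to $T_{\sigma(i)\sigma(i)}$ in the spherical presentation), so $P_{\rho_n}$ preserves $\calZ_n$ and $q$ kills its image. It then follows that
\[
F_n^2=qP_{\rho_n}sqP_{\rho_n}s=qP_{\rho_n}^2s=qs=\id,
\]
using $\rho_n^2=1$. For $F_n(c_n)$, I would write $2s(c_n)=T_{nn}-\sum_{i<n}T_{ii}$ via \eqref{eq:output}. Applying $P_{\rho_n}$ gives $T_{00}-\sum_{i\ne 0}T_{ii}$ in spherical indices, where $T_{nn}$ appears with a minus sign. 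This expression equals $-T_{nn}+2T_{00}$ modulo framings $T_{ii}$ with $i<n$. Expanding $T_{nn}=\sum T_{ii}+2s(c_n)$ shows it is $\equiv-2s(c_n)$ modulo $\calZ_n$. Applying $q$ then gives $F_n(c_n)=-c_n$.

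Part (3) is the only step that needs an argument rather than a computation. By (1), $P_{\rho_n}s-sF_n$ takes values in the central ideal $\calZ_n$ on generators. Both maps are Lie morphisms $\ft_n\to\fft_n$, so on a bracket $[a,b]$ the central error terms drop out. Concretely, if $P_{\rho_n}s(a)=sF_n(a)+z_a$ with $z_a$ central, then
\[
P_{\rho_n}s([a,b])=[sF_n(a)+z_a,\,sF_n(b)+z_b]=sF_n([a,b]).
\]
I expect the main obstacle to be extending this from brackets of generators to all of $\fd_n=\calF^2\ft_n$. The set of $a\in\ft_n$ with $P_{\rho_n}s(a)-sF_n(a)\in\calZ_n$ is a Lie subalgebra containing the generators, so it is all of $\ft_n$ by continuity and closedness. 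Hence $P_{\rho_n}s$ and $sF_n$ agree on every bracket, and therefore on the closed span $\fd_n$, by continuity. Both maps are Lie morphisms that agree on $\fd_n$, so they induce the same algebra maps on $\UU(\fd_n)$, since $\UU(\fd_n)$ is topologically generated by $\fd_n$.
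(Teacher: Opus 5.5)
\textbf{Gap in the involutivity step of (2).} Parts (1) and (3) are fine, as is the claim that $qP_{\rho_n}s$ is a continuous Lie endomorphism given by the flip on generators. Your (3) is the paper's argument: the defect is a Lie map into an abelian algebra, so it vanishes on $\fd_n$. Your modulo-$\calZ_n$ computation of $F_n(c_n)$ is also a valid alternative to the paper's generator computation.

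The problem is the identity $qP_{\rho_n}sq=qP_{\rho_n}$. It is false on $\fft_n$, because $P_{\rho_n}$ does \emph{not} preserve $\calZ_n=\bigoplus_{0\le i<n}\kk T_{ii}$. It permutes the $n+1$ spherical framings $T_{00},\dots,T_{nn}$. But $\calZ_n$ contains only the first $n$ of them, and $T_{nn}=\sum_{i<n}T_{ii}+2s(c_n)\notin\calZ_n$. Concretely, $P_{\rho_n}(T_{00})=T_{nn}$, so $qP_{\rho_n}(T_{00})=2c_n\neq0=qP_{\rho_n}sq(T_{00})$, already for $n=2$. Your own $F_n(c_n)$ computation uses $P_{\rho_n}(T_{00})=T_{nn}$, which contradicts the claim. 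So the chain $F_n^2=qP_{\rho_n}sqP_{\rho_n}s=qP_{\rho_n}^2s$ is unjustified as written. If the framing part of $P_{\rho_n}s(u)$ that $sq$ discards contained $T_{00}$, reapplying $qP_{\rho_n}$ would produce a nonzero multiple of $c_n$.

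\textbf{The missing idea: the defect never involves $T_{00}$.} Write $P_{\rho_n}s=sF_n+\zeta$ with $\zeta=(\id-sq)P_{\rho_n}s$.
\begin{itemize}
\item By \eqref{eq:framing-splitting}, $\zeta$ is a continuous Lie map into the abelian $\calZ_n$, so it kills $\fd_n$.
\item Since $\ft_n=\spanlin\{t_{ij}\}+\fd_n$, the map $\zeta$ is determined by its values on generators. By (1) these lie in $\calZ_n'=\spanlin\{T_{ii}:0<i<n\}$, so $\zeta(\ft_n)\subset\calZ_n'$.
\item For $0<i<n$ we have $P_{\rho_n}(T_{ii})=T_{n-i,n-i}$, hence $P_{\rho_n}(\calZ_n')=\calZ_n'\subset\ker q$.
\end{itemize}
Therefore $u=qP_{\rho_n}^2s(u)=qP_{\rho_n}\bigl(sF_n(u)+\zeta(u)\bigr)=F_n^2(u)$, which is the paper's proof. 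Equivalently, your identity $qP_{\rho_n}sq=qP_{\rho_n}$ does hold on $s\ft_n\oplus\calZ_n'$, which contains the image of $P_{\rho_n}s$. It fails on all of $\fft_n$.
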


\begin{proof}
(1) For $0<i,j<n$, \eqref{eq:tau} gives $P_{\rho_n}s(t_{ij})=T_{n-i,n-j}$ with $0<n-i,n-j<n$. For $0<j<n$,
\[
P_{\rho_n}s(t_{0j})=T_{n,\,n-j}=-\sum_{\substack{k<n\\k\ne n-j}}T_{k,\,n-j}-T_{n-j,\,n-j},
\]
and the first sum is $sF_n(t_{0j})$.

(2) By \eqref{eq:framing-splitting}, $q$ is a morphism of Lie algebras, so $q P_{\rho_n}s$ is a continuous Lie algebra endomorphism of $\ft_n$. By (1) it is given on generators by \eqref{eq:intro-flip}. Write $F_n=qP_{\rho_n}s$. Again by \eqref{eq:framing-splitting},
\[
P_{\rho_n}s=sF_n+\zeta,
\]
where $\zeta\colon\ft_n\to\calZ_n$ is a continuous Lie algebra map to an abelian Lie algebra, hence vanishes on $\fd_n$. By (1), $\zeta$ takes values in $\calZ_n'=\spanlin\{T_{ii}:0<i<n\}$, and $P_{\rho_n}(\calZ_n')=\calZ_n'\subset\ker q$ by \eqref{eq:tau}. Since $P_{\rho_n}^2=1$ and $qs=\id$,
\[
u=qP_{\rho_n}^2s(u)=qP_{\rho_n}\bigl(sF_n(u)+\zeta(u)\bigr)=F_n^2(u)\qquad(u\in\ft_n),
\]
so $F_n$ is an involutive automorphism. Finally, with $c'=\sum_{0<a<b<n}t_{ab}$ we have $F_n(c')=c'$ and
\[
F_n\Bigl(\sum_{0<m<n}t_{0m}\Bigr)=-\sum_{0<m<n}\ \sum_{k\ne m}t_{km}=-\sum_{0<m<n}t_{0m}-2c',
\]
so $F_n(c_n)=-c_n$.

(3) Since $\zeta$ vanishes on $\fd_n$, $P_{\rho_n}s=sF_n$ on $\fd_n$, and both sides are algebra maps on $\UU(\fd_n)$.
\end{proof}

% Proposition~\ref{prop:reflection}(2) recovers the involutivity of the flip \cite[Lemma~60]{FHK} as a consequence of $\rho_n^2=1$. The reason is that the framing defect \eqref{eq:defect} never involves $T_{00}$, the only framing which $P_{\rho_n}$ moves out of $\ker q$.

% \subsection{The framing defect}

\begin{prop}\label{prop:taylor}
Let $n\ge2$. Identify
\[
\UU(\fft_n)=s\,\UU(\ft_n)\,\widehat\otimes\,\kk[[T_{00},\dots,T_{n-1,n-1}]]
\]
via \eqref{eq:framing-splitting}, and consider the derivation
\[
D=\sum_{j=1}^{n-1}\partial_{0j}\otimes T_{n-j,n-j}.
\]
Then for every $u\in\UU(\ft_n)$
\begin{equation}\label{eq:taylor}
P_{\rho_n}s(u)=(sF_n\otimes\id)\bigl(\exp(-D)(u\otimes1)\bigr)=\sum_{\nu\in\N^{n-1}}\frac{(-1)^{|\nu|}}{\nu!}\,T^\nu\,sF_n\bigl(\partial^\nu u\bigr),
\end{equation}
where $T^\nu=\prod_{j=1}^{n-1}T_{n-j,n-j}^{\nu_j}$, $\partial^\nu=\prod_{j=1}^{n-1}\partial_{0j}^{\nu_j}$ and $\nu!=\prod_j\nu_j!$.
\end{prop}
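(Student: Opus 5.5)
Both sides of \eqref{eq:taylor} are continuous algebra homomorphisms $\UU(\ft_n)\to\UU(\fft_n)$. It therefore suffices to check that the right-hand side is multiplicative and that it agrees with the left-hand side on the generators $t_{ij}$. Those generators topologically generate $\UU(\ft_n)$, and both sides are continuous.

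\emph{Multiplicativity.} Consider $D=\sum_{j}\partial_{0j}\otimes T_{n-j,n-j}$ as an operator on $\UU(\ft_n)\widehat\otimes\kk[[T_{00},\dots,T_{n-1,n-1}]]$, where it acts through the first factor. Each $\partial_{0j}$ is a derivation and each $T_{n-j,n-j}$ is central. Hence every summand $\partial_{0j}\otimes T_{n-j,n-j}$, namely $u\otimes f\mapsto \partial_{0j}u\otimes T_{n-j,n-j}f$, is a derivation of this commutative-coefficient extension. Their sum $D$ is then a derivation. Because the summands commute (the $\partial_{0j}$ commute, by the discussion in Section~\ref{subsec:flip}), we get
\[
\exp(-D)=\prod_j\exp(-\partial_{0j}\otimes T_{n-j,n-j}).
\]
$D$ raises the polynomial degree in the framings by one, so the exponential series converges in the complete topology. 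The exponential of a derivation is then an algebra automorphism. This is the Leibniz formula $D^m(uv)=\sum\binom{m}{k}D^ku\,D^{m-k}v$, resummed. The map $sF_n\otimes\id$ is also an algebra map, because $F_n$ is an automorphism by Proposition~\ref{prop:reflection}(2) and $\calZ_n$ is central by \eqref{eq:framing-splitting}. So the composite is multiplicative. Expanding $\exp(-D)=\sum_\nu\frac{(-1)^{|\nu|}}{\nu!}\partial^\nu\otimes T^\nu$, using commutativity, gives the second expression in \eqref{eq:taylor}.

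\emph{Generators.} Take $t_{ij}$ with $0<i,j<n$. Here $\partial_{0k}t_{ij}=0$, so the right-hand side is $sF_n(t_{ij})$, and this equals $P_{\rho_n}s(t_{ij})$ by \eqref{eq:defect}. Now take $t_{0j}$. Since $\partial_{0k}t_{0j}=\delta_{jk}$ and the higher derivatives vanish, the right-hand side is
\[
sF_n(t_{0j})-T_{n-j,n-j}\,sF_n(1)=sF_n(t_{0j})-T_{n-j,n-j}.
\]
This is again \eqref{eq:defect}.

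The only point I expect to need care is the claim that $D$ is a derivation whose exponential is multiplicative and converges. This requires the centrality of the $T_{ii}$ together with the tensor decomposition of $\UU(\fft_n)$, so that placing $T^\nu$ to the left of $sF_n(\partial^\nu u)$ is harmless. Convergence holds because the summand indexed by $\nu$ has framing degree $|\nu|$, and the weight filtration and the framing filtration together make the sum converge. Alternatively, one can argue via Lemma~\ref{lem:kernel}. Write $u=\sum_\nu t_0^\nu u_\nu$ with $u_\nu\in\UU(\fh_n)$ and $t_0^\nu=\prod_j t_{0j}^{\nu_j}$. The generator computation shows $P_{\rho_n}s=sF_n$ on $\fh_n$. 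Then $P_{\rho_n}s(u)=\sum_\nu\prod_j\bigl(sF_n(t_{0j})-T_{n-j,n-j}\bigr)^{\nu_j}\,sF_n(u_\nu)$. Expanding binomially and comparing with $\partial^\mu u=\sum_\nu\frac{\nu!}{(\nu-\mu)!}\,t_0^{\nu-\mu}u_\nu$ reproduces \eqref{eq:taylor} term by term. This gives a fully explicit check that needs no separate convergence argument.
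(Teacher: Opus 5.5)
Your proposal is correct and follows the paper's proof: $D$ is a continuous derivation because the framings are central, $\exp(-D)$ converges and is multiplicative by the Leibniz rule, and $sF_n\otimes\id$ is an algebra map, so both sides are continuous algebra maps. Agreement on the generators then follows from \eqref{eq:defect}, and the second expression comes from expanding $\exp(-D)$ using that the $\partial_{0j}$ commute.

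Your alternative check through the expansion of Lemma~\ref{lem:kernel} with $\fk=\fh_n$ is also valid. The ordered product $\prod_j(sF_n(t_{0j})-T_{n-j,n-j})^{\nu_j}$ expands binomially because the $T_{ii}$ are central, although that route still uses continuity of $P_{\rho_n}s$ to handle infinite sums.
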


\begin{proof}
Since the $T_{ii}$ are central, $D$ is a continuous derivation of
\[
\UU(\ft_n)\,\widehat\otimes\,\kk[[T_{00},\dots,T_{n-1,n-1}]].
\]
It lowers the weight in $\UU(\ft_n)$ by one and raises the polynomial degree in the framings by one, so $\exp(-D)$ converges, and it is an algebra automorphism by the Leibniz formula for $D^k(uv)$. The map $sF_n\otimes\id$ is an algebra map because the framings are central. Both sides of the first equality in \eqref{eq:taylor} are therefore continuous algebra maps in $u$, and it suffices to compare them on generators. For $0<a<b<n$, $D(t_{ab})=0$ and both sides equal $sF_n(t_{ab})$. For $0<j<n$, $D(t_{0j}\otimes1)=1\otimes T_{n-j,n-j}$ and $D^2(t_{0j}\otimes 1)=0$, so the right-hand side is $sF_n(t_{0j})-T_{n-j,n-j}$, which is $P_{\rho_n}s(t_{0j})$ by \eqref{eq:defect}. The second equality follows by expanding $\exp(-D)$, the $\partial_{0j}$ commuting.
\end{proof}

\begin{cor}\label{cor:dagger}
Let $n\ge2$ and $u\in\UU(\ft_n)$. The following are equivalent:
\begin{enumerate}[label=\textup{(\roman*)}]
\item $P_{\rho_n}s(u)\in s\,\UU(\ft_n)$.
\item $u\in\UU(\ft_n)^\dagger$.
\item $P_{\rho_n}s(u)=sF_n(u)$.
\end{enumerate}
Moreover $\UU(\ft_n)^\dagger=\UU(\fh_n)$, and $F_n(\fh_n)=\fh_n$.
\end{cor}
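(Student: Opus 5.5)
The argument rests on the Taylor formula of Proposition~\ref{prop:taylor}, read through the tensor decomposition $\UU(\fft_n)=s\,\UU(\ft_n)\,\widehat\otimes\,\kk[[T_{00},\dots,T_{n-1,n-1}]]$. In that formula the coefficient of the framing monomial $T^\nu$ in $P_{\rho_n}s(u)$ is exactly $\frac{(-1)^{|\nu|}}{\nu!}\,sF_n(\partial^\nu u)$. Since the monomials $T^\nu$ are linearly independent over $s\,\UU(\ft_n)$, and $sF_n$ is injective ($F_n$ is an automorphism by Proposition~\ref{prop:reflection}(2) and $qs=\id$), we can read off $\partial^\nu u$ from the framed part of $P_{\rho_n}s(u)$.

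The implications go as follows.

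For (i)$\Rightarrow$(ii), suppose $P_{\rho_n}s(u)\in s\,\UU(\ft_n)$. Then every coefficient with $|\nu|\ge1$ vanishes. Taking $\nu=\varepsilon_j$ gives $sF_n(\partial_{0j}u)=0$, hence $\partial_{0j}u=0$ for each $j$. So $u\in\UU(\ft_n)^\dagger$.

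For (ii)$\Rightarrow$(iii), if all $\partial_{0j}u=0$, then $\partial^\nu u=0$ for every $\nu\neq0$. Only the $\nu=0$ term survives, and it is $sF_n(u)$.

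The implication (iii)$\Rightarrow$(i) is trivial.

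The identity $\UU(\ft_n)^\dagger=\UU(\fh_n)$ is Lemma~\ref{lem:kernel}, applied as in the remark after it: take $\fg=\ft_n$, $\fk=\fh_n$ and $v_j=t_{0j}$. This requires that the classes of the $t_{0j}$ form a basis of $\ft_n/\fh_n$. That holds because weight one of $\ft_n$ is freely spanned by the $t_{ij}$, the relations being quadratic, and $\fd_n=\calF^2\ft_n$.

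For $F_n(\fh_n)=\fh_n$, I plan a direct check on a spanning set. Since $F_n$ is a weight-preserving automorphism, it maps $\fd_n=\calF^2\ft_n$ onto itself. It therefore suffices to look at weight one, where $F_n(t_{ij})=t_{n-i,n-j}$ for $0<i,j<n$ stays in the span of the $t_{ab}$ with $0<a<b<n$. Surjectivity follows from involutivity.

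Alternatively, one can derive invariance from the equivalence just proved: for $u\in\UU(\fh_n)$ we have $P_{\rho_n}sF_n(u)=P_{\rho_n}^2s(u)=s(u)$, which is unframed, so $F_n(u)$ is dagger by (i)$\Leftrightarrow$(ii). This uses $P_{\rho_n}s F_n(u)=P_{\rho_n}P_{\rho_n}s(u)$, valid by (iii). Either route works.

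The main point needing care is the coefficient extraction in (i)$\Rightarrow$(ii). One must use that the splitting \eqref{eq:framing-splitting} makes the completed tensor product decomposition a genuine direct sum over $\nu$, so that vanishing of the framed part forces each coefficient to vanish individually. Everything else is formal.
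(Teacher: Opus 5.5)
Your proposal is correct and follows the paper's proof essentially step for step. You extract the coefficient of $T_{n-j,n-j}$ in the Taylor formula of Proposition~\ref{prop:taylor} and use injectivity of $sF_n$ for (i)$\Rightarrow$(ii), and use vanishing of the $\nu\neq0$ terms for (ii)$\Rightarrow$(iii). You apply Lemma~\ref{lem:kernel} with $\fk=\fh_n$, $v_j=t_{0j}$ for $\UU(\ft_n)^\dagger=\UU(\fh_n)$. For the invariance of $\fh_n$ you combine $F_n(\fd_n)=\fd_n$ with $F_n(t_{ij})=t_{n-i,n-j}$ for $0<i,j<n$. The extra details you supply are all sound: the basis of $\ft_n/\fh_n$, surjectivity from $F_n^2=\id$, and the alternative invariance argument through $P_{\rho_n}^2=1$.
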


\begin{proof}
The monomials $T^\nu$ are linearly independent over $s\,\UU(\ft_n)$, and $\nu\mapsto T^\nu$ is injective. In \eqref{eq:taylor} the coefficient of $T_{n-j,n-j}$ is $-sF_n(\partial_{0j}u)$. If (i) holds, these coefficients vanish, and since $sF_n$ is injective, $\partial_{0j}u=0$ for all $j$, which is (ii). If (ii) holds, every term with $\nu\ne0$ in \eqref{eq:taylor} vanishes, which is (iii). Clearly (iii) implies (i). The equality $\UU(\ft_n)^\dagger=\UU(\fh_n)$ is Lemma~\ref{lem:kernel} with $\fk=\fh_n$ and $v_j=t_{0j}$. Finally $F_n(\fd_n)=\fd_n$ because $F_n$ is an automorphism, and $F_n(t_{ij})=t_{n-i,n-j}$ for $0<i,j<n$.
\end{proof}

\subsection{The \texorpdfstring{$\Sigma_{n+1}$}{Sigma(n+1)}-stable locus}\label{subsec:stable-locus}

\begin{prop}\label{prop:core}
Let $n\ge2$. Then
\[
\bigcap_{\sigma\in\Sigma_{n+1}}P_\sigma\bigl(s\,\UU(\ft_n)\bigr)=s\,\UU(\fd_n),\qquad
\bigcap_{\sigma\in\Sigma_{n+1}}P_\sigma(s\ft_n)=s\fd_n .
\]
\end{prop}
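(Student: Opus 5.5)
We prove the two inclusions separately; the Lie-level statement follows from the enveloping one by intersecting with $\fft_n$ (or is proved in parallel by the same argument).

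\emph{The inclusion $\supseteq$.} We first show that $s\fd_n$ is $\Sigma_{n+1}$-stable. Using the splitting \eqref{eq:framing-splitting} with $\calZ_n$ central, one has $[\fft_n,\fft_n]=[s\ft_n,s\ft_n]$. Its closure is therefore $s\,\cl{[\ft_n,\ft_n]}=s\fd_n$. Each $P_\sigma$ is a continuous Lie automorphism of $\fft_n$, so it preserves the closed derived algebra. Hence $P_\sigma(s\fd_n)=s\fd_n$, and applying $\UU$ gives $P_\sigma(s\,\UU(\fd_n))=s\,\UU(\fd_n)$. In particular $s\,\UU(\fd_n)\subset P_\sigma(s\,\UU(\ft_n))$ for every $\sigma$.

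\emph{The inclusion $\subseteq$.} It suffices to intersect over a few well-chosen $\sigma$. Let $u=s(u_0)$ lie in the intersection. Then $P_{\sigma}^{-1}s(u_0)\in s\,\UU(\ft_n)$ for all $\sigma$. Take $\sigma=\rho_n$ and conjugates $\pi\rho_n\pi^{-1}$ with $\pi\in\Sigma_n\subset\Sigma_{n+1}$. Because $\Sigma_n$ preserves $s\,\UU(\ft_n)$ (it just permutes the $t_{ij}$), Corollary~\ref{cor:dagger} applied to $P_\pi^{-1}u_0$ shows that $\pi^{-1}u_0$ lies in $\UU(\ft_n)^\dagger$. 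Thus $u_0$ is killed by every derivation $\partial_{\pi(0)\pi(j)}$, that is, by the derivations extracting the coefficient of $t_{ab}$ for all pairs with $a\ne b$. Here one should check that $P_\pi$ intertwines $\partial_{0j}$ with the coefficient-extraction along $t_{\pi(0)\pi(j)}$, which holds because $e_{0j}$ is defined on the basis $\{t_{ab}\}$ of the weight-one part, and $\Sigma_n$ permutes that basis.

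Now apply Lemma~\ref{lem:kernel} with $\fg=\ft_n$, $\fk=\fd_n$, and $v$ the $t_{ab}$, $a<b$. These form a basis of $\ft_n/\fd_n$, since the weight-one part is free on them. The common kernel of all these derivations is exactly $\UU(\fd_n)$, so $u\in s\,\UU(\fd_n)$. For the Lie version, intersect with $s\ft_n$: an element of $\ft_n$ lying in $\UU(\fd_n)$ lies in $\fd_n$, since primitive elements of $\UU(\fd_n)$ are $\fd_n$.

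The main obstacle is the bookkeeping in the second step. We must confirm that conjugating $\rho_n$ by $\Sigma_n$ reaches every pair $\{a,b\}$. Note that $\rho_n$ paired with $\pi$ sending $0\mapsto a$ and $j\mapsto b$ works for any $a\ne b$ with $a,b<n$, since $j$ ranges over $1,\dots,n-1$. We must also check that conjugating the criterion of Corollary~\ref{cor:dagger} by $P_\pi$ is compatible with the definition of $\partial_{0j}$ via $e_{0j}$.
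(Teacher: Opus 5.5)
Your proof is correct and follows essentially the same route as the paper. For $\supseteq$ you use that $\cl{[\fft_n,\fft_n]}=s\fd_n$ is preserved by every $P_\sigma$. For $\subseteq$ you combine $\rho_n$ with elements of $\Sigma_n$, apply Corollary~\ref{cor:dagger}, transport the derivations via $P_\pi\partial_{ab}P_\pi^{-1}=\partial_{\pi(a)\pi(b)}$, conclude with Lemma~\ref{lem:kernel} for $\fk=\fd_n$, and get the Lie statement by passing to primitives; the only cosmetic difference is that you use the conjugates $\pi\rho_n\pi^{-1}$ together with $\Sigma_n$-stability of $s\,\UU(\ft_n)$, where the paper uses the products $\rho_n\pi$ directly.
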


\begin{proof}
Since the framings are central, \eqref{eq:framing-splitting} gives $\cl{[\fft_n,\fft_n]}=s\fd_n$. Lie algebra automorphisms preserve closed derived subalgebras, so $P_\sigma(s\fd_n)=s\fd_n$ and $P_\sigma(s\,\UU(\fd_n))=s\,\UU(\fd_n)\subset s\,\UU(\ft_n)$ for all $\sigma$. This proves the inclusions $\supset$.

Conversely, let $s(u)$ lie in the left-hand intersection, so that $P_\sigma s(u)\in s\,\UU(\ft_n)$ for all $\sigma\in\Sigma_{n+1}$. Fix $0\le i<n$ and $\pi\in\Sigma_n$ with $\pi(i)=0$. Then $P_{\rho_n}s(P_\pi u)=P_{\rho_n}P_\pi s(u)\in s\,\UU(\ft_n)$, so $P_\pi u\in\UU(\ft_n)^\dagger$ by Corollary~\ref{cor:dagger}. For $a\ne b$ let $e_{ab}$ extract the weight-one coefficient of $t_{ab}$ and $\partial_{ab}=(e_{ab}\otimes\id)\circ\Delta$. Then $P_\pi\partial_{ab}P_\pi^{-1}=\partial_{\pi(a)\pi(b)}$. Hence $\partial_{0k}(P_\pi u)=0$ for $0<k<n$ gives $\partial_{ik}u=0$ for all $k\ne i$. As $i$ is arbitrary, $\partial_{ab}u=0$ for all $a<b$, and Lemma~\ref{lem:kernel} with $\fk=\fd_n$ and $\{v\}=\{t_{ab}\}_{a<b}$ gives $u\in\UU(\fd_n)$.

For the Lie statement, $P_\sigma$ and $s$ are morphisms of Hopf algebras, so $P_\sigma(s\ft_n)=P_\sigma(s\,\UU(\ft_n))\cap\fft_n$, and intersecting the first identity with $\fft_n=\Prim\UU(\fft_n)$ and using $\Prim\UU(\fd_n)=\fd_n$ gives the second.
\end{proof}

\begin{cor}\label{cor:suboperad}
The family $s\fd=(s\fd_n)_{n\ge1}$ is a cyclic suboperad of $\fft$ in complete Lie algebras, and $s\fd_n=\cl{[\fft_n,\fft_n]}$. If $(V_n)_{n\ge1}$ is any family of subspaces $V_n\subset s\ft_n$ with $P_\sigma(V_n)=V_n$ for all $\sigma\in\Sigma_{n+1}$, then $V_n\subset s\fd_n$ for all $n$. In particular, $s\fd$ is the largest cyclic suboperad of $\fft$ contained in $s\ft$.
\end{cor}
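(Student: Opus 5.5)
The corollary has three parts, and I will take them in order: $s\fd$ is a cyclic suboperad, $s\fd_n$ is the closed derived algebra of $\fft_n$, and $s\fd$ is maximal among $\Sigma_{n+1}$-stable families inside $s\ft$.

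\emph{Derived algebra.} The identity $s\fd_n=\cl{[\fft_n,\fft_n]}$ comes from the splitting \eqref{eq:framing-splitting}. Since $\calZ_n$ is central, we have $[\fft_n,\fft_n]=[s\ft_n,s\ft_n]=s[\ft_n,\ft_n]$. Taking closures then gives $s\fd_n$, exactly as at the start of the proof of Proposition~\ref{prop:core}.

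\emph{Stability under $\Sigma_{n+1}$.} Each $P_\sigma$ is a continuous Lie algebra automorphism of $\fft_n$, so it preserves the closed derived algebra. Hence $P_\sigma(s\fd_n)=s\fd_n$ for all $\sigma\in\Sigma_{n+1}$.

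\emph{Suboperad.} I must check closure under the partial compositions $\circ_a\colon\fft_m\oplus\fft_k\to\fft_{m+k-1}$. These are Lie algebra maps out of a direct sum, so $\circ_a$ sends $\cl{[\fft_m\oplus\fft_k,\fft_m\oplus\fft_k]}=\cl{[\fft_m,\fft_m]}\oplus\cl{[\fft_k,\fft_k]}$ into $\cl{[\fft_{m+k-1},\fft_{m+k-1}]}$. Alternatively, $s\colon\ft\to\fft$ is an operad morphism and $\fd=\calF^2\ft$ is preserved because the cabling maps \eqref{eq:cabling} preserve weight. Either way, $s\fd(m)\circ_a s\fd(k)\subset s\fd(m+k-1)$. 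The operadic unit lies in arity one and is the zero element of the Lie algebra $\fft_1$, so it poses no issue. Together with $\Sigma_{n+1}$-stability, this shows $s\fd$ is a cyclic suboperad.

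\emph{Maximality.} Let $(V_n)$ be a family of subspaces with $V_n\subset s\ft_n$ and $P_\sigma V_n=V_n$ for all $\sigma\in\Sigma_{n+1}$. Then $V_n=P_\sigma V_n\subset P_\sigma(s\ft_n)$ for every $\sigma$. So $V_n$ lies in the intersection $\bigcap_\sigma P_\sigma(s\ft_n)$, which equals $s\fd_n$ by the Lie statement of Proposition~\ref{prop:core}. For $n\ge2$ this is immediate.

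\emph{Arity one.} This is the only delicate point, because Proposition~\ref{prop:core} assumes $n\ge2$. Here $\ft_1=0$, so $s\ft_1=0=s\fd_1$ and the inclusion is trivial. Any cyclic suboperad contained in $s\ft$ is in particular such a stable family, so it sits inside $s\fd$. Combined with the suboperad part, this shows $s\fd$ is the largest cyclic suboperad of $\fft$ contained in $s\ft$.

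I expect no real obstacle. The main care is in the suboperad step: checking that the cabling formulas send derived elements to derived elements, which follows because the compositions are Lie algebra maps out of $\oplus$.
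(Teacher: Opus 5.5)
Your proposal is correct and follows the paper's own proof essentially step by step. The splitting \eqref{eq:framing-splitting} identifies $s\fd_n$ with the closed derived algebra, closure under $\circ_a$ and $P_\sigma$ holds because these are continuous Lie algebra maps, and maximality is read off from the Lie statement of Proposition~\ref{prop:core}. You also treat arity one separately, using $s\ft_1=0$; the paper leaves this case implicit even though Proposition~\ref{prop:core} is stated only for $n\ge2$.
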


\begin{proof}
The equality $s\fd_n=\cl{[\fft_n,\fft_n]}$ was shown in the proof of Proposition~\ref{prop:core}. Each $\circ_a\colon\fft_m\oplus\fft_k\to\fft_{m+k-1}$ and each $P_\sigma$ is a continuous Lie algebra map, and the closed derived algebra of $\fft_m\oplus\fft_k$ is $s\fd_m\oplus s\fd_k$. Hence $s\fd$ is closed under composition and the cyclic action. If $V_n\subset s\ft_n$ is $\Sigma_{n+1}$-stable, then $V_n=P_\sigma(V_n)\subset P_\sigma(s\ft_n)$ for every $\sigma$, and Proposition~\ref{prop:core} gives $V_n\subset s\fd_n$.
\end{proof}

\begin{remark}\label{rem:noquotient}
The framings cannot be discarded operadically: $qP_{\rho_n}(T_{00})=2c_n\ne0=F_nq(T_{00})$, and $q$ is not a morphism of operads by \eqref{eq:cabling}. The flip is Ihara's action of $\rho_n$ on $\ft_n/\kk c_n$ \cite[\S5.3]{ihara_braids}, but $(\kk c_n)_n$ is not an operadic ideal, since $c_m\circ_a0=c_{m+k-1}-c_B$.
\end{remark}

\begin{proof}[Proof of Theorem~\ref{thmA}]
The identity $F_n=qP_{\rho_n}s$ is Proposition~\ref{prop:reflection}(2), and $P_{\rho_n}s=sF_n$ on $\UU(\fd_n)$ is Proposition~\ref{prop:reflection}(3). Part (2) is Proposition~\ref{prop:taylor}. The description of $\UU(\ft_n)^\dagger$ and the identity $P_{\rho_n}s=sF_n$ on it are Corollary~\ref{cor:dagger}. The intersection formula is Proposition~\ref{prop:core}, and the statement on $s\fd$ is Corollary~\ref{cor:suboperad}.
\end{proof}

\section{The flip equation and the reflected pentagon}\label{sec:arity4}
We now set $n=4$ and prove Theorem~\ref{thmB}(1). The flip sends one path around the pentagon to the other, so the pentagon becomes the flip equation, and applying the flip twice gives the two-cycle relation. In Section~\ref{sec:pentagon} we interpret these statements as a mirror symmetry of the pentagon in $\hPaRCD(4)$.

\subsection{The flip equation}

We repeatedly use the shift property \cite[proof of Lemma~5]{Furusho_pentagon},  and \cite[Lemma~77]{FHK}: if $\psi\in\hff'$ or $\psi\in\exp(\hff')$ and $w$ commutes with $u$ and $v$, then
\begin{equation}\label{eq:shift}
\psi(u+w,v)=\psi(u,v+w)=\psi(u,v),
\end{equation}
since every Lie word occurring in $\psi$ has length at least two.

Let $\pi_0\colon\ft_4\to\ft_{\{1,2,3\}}$ delete the strand $0$, that is, $\pi_0(t_{0j})=0$ and $\pi_0(t_{ij})=t_{ij}$ for $i,j>0$, and let $\iota\colon\ft_{\{1,2,3\}}\hookrightarrow\ft_4$ be the inclusion, so that $\pi_0\iota=\id$. Put $\fd_{123}=\iota\,\cl{[\ft_{\{1,2,3\}},\ft_{\{1,2,3\}}]}\subset\fd_4$. Since $\ft_{\{1,2,3\}}$ is the direct sum of the free Lie algebra on $t_{12},t_{23}$ and the centre $\kk c_{123}$ \cite{Drin}, the substitution $x\mapsto t_{12}$, $y\mapsto t_{23}$ is injective on $\UU(\hff)$ and identifies $\hff'$ with $\fd_{123}$ and $\exp(\hff')$ with $\exp(\fd_{123})$.

\begin{prop}\label{prop:pathflip}
For $\psi\in\hff'$,
\[
F_4(\psi^{0,1,2})=\psi^{01,2,3},\qquad F_4(\psi^{0,12,3})=\psi^{0,1,23},\qquad F_4(\psi^{1,2,3})=\psi(t_{23},t_{12}),
\]
and the same identities hold for $g\in\exp(\hff')$. Consequently $F_4(S_\psi)=S'_\psi$ and $F_4(S_g)=S'_g$.
\end{prop}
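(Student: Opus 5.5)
The plan is a direct computation on generators, combined with the shift property \eqref{eq:shift}. By Proposition~\ref{prop:reflection}(2), $F_4$ is a continuous Lie algebra automorphism of $\ft_4$, and hence a continuous algebra automorphism of $\UU(\ft_4)$. So for any $\psi\in\UU(\hff)$ and elements $u,v$ of the augmentation ideal we have $F_4(\psi(u,v))=\psi(F_4u,F_4v)$. It therefore suffices to compute $F_4$ on the two arguments of each of the three substitutions, and then to rewrite the result, modulo shifts by elements commuting with both arguments, into the target arguments. The group-like case follows verbatim, because $g\in\exp(\hff')$ is again a series in Lie words of length at least two, and \eqref{eq:shift} applies to it as stated.

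With $n=4$, \eqref{eq:flip} reads $F_4(t_{ij})=t_{4-i,4-j}$ for $0<i,j<4$ and $F_4(t_{0j})=-\sum_{k<4}t_{k,4-j}$. Set $c=c_4$.

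\emph{First identity.} We have $F_4(t_{01})=-(t_{03}+t_{13}+t_{23})$ and $F_4(t_{12})=t_{23}$. Since $t_{03}+t_{13}+t_{23}=c-(t_{01}+t_{02}+t_{12})$, the first argument is $(t_{02}+t_{12})+w$ with $w=t_{01}-c$. The element $t_{01}$ commutes with $t_{02}+t_{12}$ by the second family of relations in \eqref{eq:ft-relations}, and with $t_{23}$ by the first family. The centre $c$ commutes with everything. Hence \eqref{eq:shift} gives $F_4(\psi^{0,1,2})=\psi(t_{02}+t_{12},t_{23})=\psi^{01,2,3}$.

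\emph{Second identity.} We have $F_4(t_{13}+t_{23})=t_{13}+t_{12}$, and
\[
F_4(t_{01}+t_{02})=-(t_{03}+t_{13}+t_{23})-(t_{02}+t_{12}+t_{23}).
\]
Using $c=\sum_{a<b}t_{ab}$, this first argument equals $t_{01}-c-t_{23}$. Now $t_{23}$ commutes with $t_{01}$, and it commutes with $t_{12}+t_{13}$ by the relation $[t_{23},t_{21}+t_{31}]=0$. So \eqref{eq:shift} yields $\psi(t_{01},t_{12}+t_{13})=\psi^{0,1,23}$.

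\emph{Third identity.} We have $F_4(t_{12})=t_{32}=t_{23}$ and $F_4(t_{23})=t_{21}=t_{12}$, so $F_4(\psi^{1,2,3})=\psi(t_{23},t_{12})$ with no shift needed.

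Finally, $F_4(S_\psi)=S'_\psi$ holds by additivity. For $g$, we get $F_4(S_g)=F_4(g^{0,1,2})F_4(g^{0,12,3})=g^{01,2,3}g^{0,1,23}=S'_g$ by multiplicativity.

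The only delicate point is the bookkeeping in the first two identities. One must find the correct commuting correction term $w$ and check, using the correct instance of the four-term relation, that it commutes with both arguments. Everything else is formal, and this is exactly where the hypothesis $\psi\in\hff'$, rather than $\psi\in\hff$, is needed.
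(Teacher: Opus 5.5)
Your proposal is correct and is essentially the paper's proof. It uses the same generator computations, $F_4(t_{01})=(t_{02}+t_{12})+(t_{01}-c_4)$ and $F_4(t_{01}+t_{02})=t_{01}-(c_4+t_{23})$, and the same commuting corrections justified by the four-term relations $[t_{01},t_{02}+t_{12}]=0$ and $[t_{23},t_{12}+t_{13}]=0$, with the shift property \eqref{eq:shift} handling both the Lie and the group-like cases.
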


\begin{proof}
Put $c_4=\sum_{i<j<4}t_{ij}$. By \eqref{eq:intro-flip}, $F_4(t_{01})=-(t_{03}+t_{13}+t_{23})=(t_{02}+t_{12})+(t_{01}-c_4)$, $F_4(t_{12})=t_{23}$,
$F_4(t_{01}+t_{02})=-(t_{02}+t_{03}+t_{12}+t_{13}+2t_{23})=t_{01}-(c_4+t_{23})$, and $F_4(t_{13}+t_{23}) =t_{12}+t_{13}$.

% \[
% \begin{aligned}
% F_4(t_{01})&=-(t_{03}+t_{13}+t_{23})=(t_{02}+t_{12})+(t_{01}-c_4), &
% F_4(t_{12})&=t_{23},\\
% F_4(t_{01}+t_{02})&=-(t_{02}+t_{03}+t_{12}+t_{13}+2t_{23})=t_{01}-(c_4+t_{23}), &
% F_4(t_{13}+t_{23})&=t_{12}+t_{13}.
% \end{aligned}
% \]
The element $t_{01}-c_4$ commutes with $t_{02}+t_{12}$, by the four-term relation $[t_{01},t_{02}+t_{12}]=0$, and with $t_{23}$. The element $c_4+t_{23}$ commutes with $t_{01}$ and, by $[t_{23},t_{12}+t_{13}]=0$, with $t_{12}+t_{13}$. By \eqref{eq:shift}, we get $F_4(\psi^{0,1,2})=\psi(t_{02}+t_{12},t_{23})=\psi^{01,2,3}$ and $F_4(\psi^{0,12,3})=\psi(t_{01},t_{12}+t_{13})=\psi^{0,1,23}$. The third identity follows from $F_4(t_{12})=t_{23}$ and $F_4(t_{23})=t_{12}$. The group-like case is identical.
\end{proof}

The group-like identities appear in the proof of \cite[Theorem~78(1)]{FHK}, written as $F_4(g^{0,1,2}g^{0,12,3})=g^{4,3,2}g^{4,32,1}$.

% and the equivalence of (a), (b) and (c) below is the case $\alpha=0$ of that proof.

\begin{prop}[The flip equation]\label{prop:flipequation}
For $g\in\exp(\hff')$ put $C_g=S_g^{-1}F_4(S_g)$. The following are equivalent:
\begin{enumerate}[label=\textup{(\alph*)}]
\item $C_g\in\exp(\fd_{123})$.
\item $C_g=g^{1,2,3}$.
\item $g$ satisfies \textup{(P)}.
\item $P_{\rho_4}s(S_g)=s\bigl(S_g\,g^{1,2,3}\bigr)$ in $\UU(\fft_4)$.
\end{enumerate}
Moreover $P_{\rho_4}s(S_g)=s(S'_g)$ for every $g\in\exp(\hff')$. For $\psi\in\hff'$ the following are equivalent: \textup{(a$'$)} $(F_4-1)S_\psi\in\fd_{123}$; \textup{(b$'$)} $(F_4-1)S_\psi=\psi^{1,2,3}$; \textup{(c$'$)} $\psi$ satisfies \textup{(P$'$)}; \textup{(d$'$)} $P_{\rho_4}s(S_\psi)=s(S_\psi+\psi^{1,2,3})$.
\end{prop}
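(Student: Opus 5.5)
We first prove the unconditional identity $P_{\rho_4}s(S_g)=s(S'_g)$ for $g\in\exp(\hff')$, and then deduce the equivalences from it. Each factor of $S_g$ is a series in Lie words of length at least two in elements of $\ft_4$. Hence $g^{0,1,2}$ and $g^{0,12,3}$ lie in $\exp(\fd_4)\subset\UU(\fd_4)$, and so does $S_g$. Proposition~\ref{prop:reflection}(3) then gives $P_{\rho_4}s(S_g)=sF_4(S_g)$. Proposition~\ref{prop:pathflip} gives $F_4(S_g)=S'_g$, which proves the identity. Since $F_4$ is an algebra automorphism, this also yields $C_g=S_g^{-1}S'_g$.

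The equivalences follow from this formula for $C_g$. For (b)$\Leftrightarrow$(c), the equation $C_g=g^{1,2,3}$ reads $S'_g=S_g\,g^{1,2,3}$, which is (P). For (c)$\Leftrightarrow$(d), combine $P_{\rho_4}s(S_g)=s(S'_g)$ with the injectivity of $s$. The implication (b)$\Rightarrow$(a) holds because $g\in\exp(\hff')$ gives $g^{1,2,3}\in\exp(\fd_{123})$ under the identification recalled before Proposition~\ref{prop:pathflip}.

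The main obstacle is (a)$\Rightarrow$(b). Suppose $C_g=\iota(h)$ with $h\in\exp([\ft_{123},\ft_{123}])$, so that $S'_g=S_g\,\iota(h)$. Apply the strand-deletion map $\pi_0$, which is an algebra map with $\pi_0\iota=\id$. We compute its effect on each factor:
\begin{itemize}
\item $\pi_0(g^{0,1,2})=g(0,t_{12})$, which equals $1$ because $g\in\exp(\hff')$ has no pure powers of $y$. Indeed, $\log g$ is a combination of brackets, so it vanishes when $x\mapsto 0$.
\item $\pi_0(g^{0,12,3})=g(0,t_{13}+t_{23})=1$.
\item $\pi_0(g^{0,1,23})=g(0,\cdot)=1$.
\item $\pi_0(g^{01,2,3})=g(t_{12},t_{23})=g^{1,2,3}$.
\end{itemize}
Hence $\pi_0(S_g)=1$ and $\pi_0(S'_g)=g^{1,2,3}$, so applying $\pi_0$ to $S'_g=S_g\,\iota(h)$ gives $g^{1,2,3}=h$, viewed inside $\ft_4$. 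Therefore $C_g=g^{1,2,3}$, which is (b). The points to check carefully are that $\pi_0$ is a well-defined Lie map $\ft_4\to\ft_{\{1,2,3\}}$, which holds because deleting a strand is a face map and respects the relations, and the vanishing $g(0,v)=1$.

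For the Lie versions, the same argument applies with sums in place of products. Linearity of $F_4$ gives $(F_4-1)S_\psi=S'_\psi-S_\psi$, and Proposition~\ref{prop:reflection}(3) together with Proposition~\ref{prop:pathflip} gives $P_{\rho_4}s(S_\psi)=s(S'_\psi)$, since $S_\psi\in\fd_4$. This yields (b$'$)$\Leftrightarrow$(c$'$)$\Leftrightarrow$(d$'$). For (a$'$)$\Rightarrow$(b$'$), apply $\pi_0$ to $S'_\psi-S_\psi=\iota(k)$: the terms $\psi(0,\cdot)$ vanish, leaving $k=\psi^{1,2,3}$.
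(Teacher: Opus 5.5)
Your proof is correct and follows the paper's argument essentially step for step. You obtain $P_{\rho_4}s(S_g)=sF_4(S_g)=s(S'_g)$ from $S_g\in\UU(\fd_4)$ via Proposition~\ref{prop:reflection}(3) and Proposition~\ref{prop:pathflip}, then read off (b)$\Leftrightarrow$(c)$\Leftrightarrow$(d) from $C_g=S_g^{-1}S'_g$ and the injectivity of $s$, and prove (a)$\Rightarrow$(b) by applying the strand deletion $\pi_0$, using $g(0,v)=1$ and $\pi_0\iota=\id$, with the Lie case handled identically, exactly as in the paper.
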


\begin{proof}
Each substitution $g^{A,B,C}$ lies in $\exp(\fd_4)$ since $g\in\exp(\hff')$, so $S_g\in\UU(\fd_4)$, and Proposition~\ref{prop:reflection}(3) and Proposition~\ref{prop:pathflip} give $P_{\rho_4}s(S_g)=sF_4(S_g)=s(S'_g)$. Hence $C_g=S_g^{-1}S'_g$, which gives (b)$\Leftrightarrow$(c), and (c)$\Leftrightarrow$(d) because $s$ is injective. Clearly (b)$\Rightarrow$(a). For (a)$\Rightarrow$(b), note that $g(0,v)=1$ for $g\in\exp(\hff')$, so
\[
\begin{gathered}
\pi_0(S_g)=g(0,t_{12})\,g(0,t_{13}+t_{23})=1,\\
\pi_0(S'_g)=g(t_{12},t_{23})\,g(0,t_{12}+t_{13})=g(t_{12},t_{23}),
\end{gathered}
\]
hence $\pi_0(C_g)=\pi_0(g^{1,2,3})$. Both $C_g$ and $g^{1,2,3}$ lie in $\iota\,\UU(\ft_{\{1,2,3\}})$, on which $\pi_0$ is injective, so $C_g=g^{1,2,3}$. The Lie case is identical, with $\pi_0(S_\psi)=0$ and $\pi_0(S'_\psi)=\psi(t_{12},t_{23})$.
\end{proof}

% The equivalences (a)$\Leftrightarrow$(b)$\Leftrightarrow$(c) are contained in the proof of \cite[Theorem~78]{FHK}, where it is shown that the correction series $h$ of Section~\ref{subsec:flip} satisfies $g=h\exp(\alpha y)$ for some $\alpha\in\kk$, and for $g\in\exp(\hff')$ this forces $h=g$. Condition (d) is the framed form of the flip equation.

\begin{cor}\label{cor:fliplocus} We have
\[
\begin{gathered}
\bigl\{g\in\exp(\calF^3\hff):C_g\in\exp(\fd_{123})\bigr\}=\GRT_1,\\
\bigl\{\psi\in\calF^3\hff:(F_4-1)S_\psi\in\fd_{123}\bigr\}=\lgrt_1 .
\end{gathered}
\]
Without the normalisation, $\{\psi\in\hff':(F_4-1)S_\psi\in\fd_{123}\}=\kk\cdot [x,y]\oplus\lgrt_1$, and $\{\psi\in\kk y\oplus\hff':(F_4-1)S_\psi\in\fd_{123}\}=\kk y\oplus\kk\cdot [x,y]\oplus\lgrt_1$.
\end{cor}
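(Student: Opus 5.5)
The four identities follow from Proposition~\ref{prop:flipequation} combined with the normalisation results of Remark~\ref{rem:normalisation}; the only genuinely new step is the last one, where the linear term in $y$ has to be absorbed.

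\textbf{The two normalised identities.} Let $g\in\exp(\calF^3\hff)$. Since $\calF^3\hff\subset\hff'$, Proposition~\ref{prop:flipequation} applies: (a)$\Leftrightarrow$(c) says that $C_g\in\exp(\fd_{123})$ holds if and only if $g$ satisfies (P). By Definition~\ref{def:grt} this means exactly $g\in\GRT_1$. The Lie statement for $\psi\in\calF^3\hff$ is (a$'$)$\Leftrightarrow$(c$'$) together with the definition of $\lgrt_1$.

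\textbf{Dropping the normalisation.} For $\psi\in\hff'$, the equivalence (a$'$)$\Leftrightarrow$(c$'$) identifies the set in question with the solutions of (P$'$) in $\hff'$. Remark~\ref{rem:normalisation} computes this solution space as $\kk\cdot[x,y]\oplus\lgrt_1$.

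\textbf{Adding $\kk y$.} Write an element as $\psi=\alpha y+\psi'$ with $\psi'\in\hff'$. The map $\psi\mapsto(F_4-1)S_\psi$ is linear, because $S_\psi$ depends linearly on $\psi$ and $F_4$ is linear. It therefore suffices to compute $(F_4-1)S_y$ and check that it lies in $\fd_{123}$.

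First, $S_y=y^{0,1,2}+y^{0,12,3}=t_{12}+t_{13}+t_{23}$.

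Next, by \eqref{eq:intro-flip}, $F_4$ fixes $t_{13}$ and exchanges $t_{12}$ and $t_{23}$. Hence $F_4(S_y)=S_y$, so $(F_4-1)S_y=0\in\fd_{123}$.

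Consequently $(F_4-1)S_\psi=(F_4-1)S_{\psi'}$, and $\psi$ lies in the locus if and only if $\psi'$ does. The previous paragraph then gives the solution set $\kk y\oplus\kk\cdot[x,y]\oplus\lgrt_1$.

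The only point needing care is this last computation. One must use the definitions of $y^{A,B,C}$ in \eqref{eq:five} literally, and confirm that $F_4$ acting on $t_{12}$, $t_{13}$ and $t_{23}$ involves no strand-$0$ generators, so that no $t_{0j}$ terms appear in $F_4(S_y)$.
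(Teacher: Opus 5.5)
Your proposal is correct and follows the paper's proof essentially step for step. The first three identities come from Proposition~\ref{prop:flipequation} together with Definition~\ref{def:grt} and Remark~\ref{rem:normalisation}. The last one follows from linearity of $\psi\mapsto(F_4-1)S_\psi$ and the direct check that $S_y=t_{12}+t_{13}+t_{23}$ is fixed by $F_4$.
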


\begin{proof}
The first two identities follow from Proposition~\ref{prop:flipequation} and Definition~\ref{def:grt}, and the third from Remark~\ref{rem:normalisation}. For the last, $S_y=t_{12}+t_{13}+t_{23}$ is fixed by $F_4$, so $y$ satisfies the flip condition, and the condition is linear in $\psi$.
\end{proof}

\begin{prop}\label{prop:astr}
Let $g\in\exp(\hff)$. Then $P_{\rho_4}s(S_g)\in s\,\UU(\ft_4)$ if and only if the coefficient of $x$ in $g$ vanishes. In that case, $P_{\rho_4}s(S_g)=s\bigl(S_g\,h^{1,2,3}\bigr)$ for some $h\in\exp(\hff')$ if and only if $g=\varphi\exp(\alpha y)$ with $\varphi\in\ASTR$ and $\alpha\in\kk$, and then $h=\varphi$.
\end{prop}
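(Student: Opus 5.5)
First part: by Corollary~\ref{cor:dagger} with $n=4$, $P_{\rho_4}s(S_g)\in s\,\UU(\ft_4)$ if and only if $\partial_{0j}S_g=0$ for $j=1,2,3$. Write $g=\exp(\gamma)$ with $\gamma=ax+by+\gamma'$, where $\gamma'\in\hff'$. Since the $\partial_{0j}$ are derivations and $g$ is group-like, $\partial_{0j}$ restricted to $\exp(\fg)$ is computed by the Lie character $e_{0j}$. Concretely, for group-like $u=\exp(\xi)$ we have $\partial_{0j}u=e_{0j}(\xi)\,u$. To see this, note $\Delta u=u\otimes u$, so $(e_{0j}\otimes\id)\Delta u=e_{0j}(u)u$, and $e_{0j}(u)=e_{0j}(\xi)$ because $e_{0j}$ vanishes outside weight one. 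Hence
\[
\partial_{0j}S_g=\bigl(e_{0j}(\log g^{0,1,2})+e_{0j}(\log g^{0,12,3})\bigr)S_g .
\]
Only the linear part matters here. The weight-one part of $\log g^{0,1,2}$ is $a t_{01}+b t_{12}$, and that of $\log g^{0,12,3}$ is $a(t_{01}+t_{02})+b(t_{13}+t_{23})$. This gives $e_{01}=2a$, $e_{02}=a$ and $e_{03}=0$. Since $S_g$ is invertible, all three vanish if and only if $a=0$. This proves the first claim.

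Second part: assume $a=0$, so $g=\varphi_0\exp(\alpha y)$ with $\varphi_0:=g\exp(-\alpha y)$ and $\alpha=b$. By the Baker--Campbell--Hausdorff formula, $\log\varphi_0$ has weight-one part $0$, so $\varphi_0\in\exp(\hff')$. Then $S_g$ lies in $\UU(\fh_4)=\UU(\ft_4)^\dagger$, so Corollary~\ref{cor:dagger} gives $P_{\rho_4}s(S_g)=sF_4(S_g)$, and the condition becomes $F_4(S_g)=S_g h^{1,2,3}$. Now I factor out the $y$-part. Substituting, $g^{0,1,2}=\varphi_0^{0,1,2}e^{\alpha t_{12}}$ and $g^{0,12,3}=\varphi_0^{0,12,3}e^{\alpha(t_{13}+t_{23})}$. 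The key commutation is that $t_{12}$ commutes with $t_{01}+t_{02}$ (four-term relation) and with $t_{13}+t_{23}$. So $e^{\alpha t_{12}}$ can be moved past $\varphi_0^{0,12,3}$, giving
\[
S_g=S_{\varphi_0}\,e^{\alpha c_{123}},\qquad c_{123}=t_{12}+t_{13}+t_{23}.
\]
Since $c_{123}$ is central in $\iota\ft_{\{1,2,3\}}$, and $F_4(c_{123})=c_{123}$ by Corollary~\ref{cor:fliplocus}'s computation $F_4(S_y)=S_y$, Proposition~\ref{prop:pathflip} gives
\[
F_4(S_g)=F_4(S_{\varphi_0})e^{\alpha c_{123}}=S'_{\varphi_0}e^{\alpha c_{123}} .
\]
Therefore the condition reads $S'_{\varphi_0}e^{\alpha c_{123}}=S_{\varphi_0}e^{\alpha c_{123}}h^{1,2,3}$. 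Because $h^{1,2,3}\in\iota\,\UU(\ft_{\{1,2,3\}})$ commutes with $c_{123}$, this is equivalent to $S'_{\varphi_0}=S_{\varphi_0}h^{1,2,3}$, that is, $C_{\varphi_0}=h^{1,2,3}\in\exp(\fd_{123})$.

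By Proposition~\ref{prop:flipequation}, (a)$\Rightarrow$(b)$\Rightarrow$(c), this forces $h^{1,2,3}=\varphi_0^{1,2,3}$, so $h=\varphi_0$ by injectivity of the substitution on $\UU(\hff)$. It also forces $\varphi_0\in\ASTR$. Conversely, if $g=\varphi\exp(\alpha y)$ with $\varphi\in\ASTR\subset\exp(\hff')$, the same computation run backwards, with (c)$\Rightarrow$(b), yields the equation with $h=\varphi$. The decomposition $g=\varphi\exp(\alpha y)$ is forced to be $\varphi=\varphi_0$, since $\alpha$ must equal the $y$-coefficient $b$ of $g$.

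The main obstacle I expect is verifying the factorisation $S_g=S_{\varphi_0}e^{\alpha c_{123}}$ carefully with the ordering conventions. In particular I need to check that $e^{\alpha t_{12}}$ commutes with both arguments of $\varphi_0^{0,12,3}$, and that $F_4$ fixes $c_{123}$ exactly, not merely up to $c_4$. From \eqref{eq:intro-flip}, $F_4$ permutes $t_{12},t_{13},t_{23}$ among themselves, so this holds.
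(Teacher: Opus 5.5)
Your proof is correct, and it takes a different route from the paper for the second half.

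\textbf{First half.} This matches the paper's argument exactly. For group-like $u$ one has $\partial_{0j}u=e_{0j}(\log u)\,u$. The weight-one part of $\log S_g$ gives the coefficients $2a,a,0$ for $t_{01},t_{02},t_{03}$. Corollary~\ref{cor:dagger}, (i)$\Leftrightarrow$(ii), then finishes.

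\textbf{Second half: the paper's route.} Once $P_{\rho_4}s(S_g)=sF_4(S_g)$ is known, the paper gives no argument of its own. It invokes \cite[Theorem~78(1)]{FHK} and remarks that the proof there works over any field of characteristic zero.

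\textbf{Second half: your route.} You reduce to the case $\alpha=0$ using only results proved in the paper:
\begin{enumerate}[label=\textup{(\arabic*)}]
\item the factorisation $S_g=S_{\varphi_0}e^{\alpha c_{123}}$, which follows from $[t_{12},t_{01}+t_{02}]=0=[t_{12},t_{13}+t_{23}]$;
\item the identity $F_4(c_{123})=c_{123}$;
\item the centrality of $c_{123}$ in $\iota\,\ft_{\{1,2,3\}}$.
\end{enumerate}
Together these show that the factor $\exp(\alpha y)$ contributes the same central term to both sides of the flip equation, so it cancels. What remains is $C_{\varphi_0}=h^{1,2,3}$, and Proposition~\ref{prop:flipequation} applies to this in both directions, via (a)$\Rightarrow$(b)$\Rightarrow$(c) and (c)$\Rightarrow$(b).

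\textbf{What each route buys.} Yours makes the statement depend only on Proposition~\ref{prop:flipequation}, that is, on the $\pi_0$-injectivity argument, and not on the proof in \cite{FHK}. It also explains transparently why the ambiguity $\exp(\kk y)$ appears. It makes explicit that the decomposition $g=\varphi\exp(\alpha y)$ is unique, since $\alpha$ must be the $y$-coefficient of $g$. The paper's route is shorter and directly identifies the statement with FHK's theorem, which is its expository aim. The factorisation you use is essentially the one the paper itself uses later, in the proof of Theorem~\ref{thmB}(1), to obtain $F_4(S_g)=S'_g$. Your argument therefore shows that the external citation could be dropped altogether.
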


\begin{proof}
For group-like $u$, $\Delta u=u\otimes u$ gives $\partial_{0j}u=e_{0j}(\log u)\,u$. If $\beta$ and $\alpha$ are the coefficients of $x$ and $y$ in $g$, the weight-one part of $\log S_g$ is $\beta(2t_{01}+t_{02})+\alpha(t_{12}+t_{13}+t_{23})$, so $S_g\in\UU(\ft_4)^\dagger$ if and only if $\beta=0$. The first assertion is Corollary~\ref{cor:dagger}, (i)$\Leftrightarrow$(ii). If $\beta=0$, Corollary~\ref{cor:dagger} gives $P_{\rho_4}s(S_g)=sF_4(S_g)$, and since $s$ is injective the second assertion is \cite[Theorem~78(1)]{FHK}, whose proof holds over any field of characteristic zero.
\end{proof}

\begin{cor}\label{cor:twocycle}
Let $g\in\exp(\hff)$ satisfy \textup{(P)}. Then $g\in\exp(\hff')$ and $g(x,y)\,g(y,x)=1$. Similarly, if $\psi\in\hff'$ satisfies \textup{(P$'$)}, then $\psi(x,y)+\psi(y,x)=0$.
\end{cor}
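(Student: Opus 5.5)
First I would establish $g\in\exp(\hff')$. Write $\beta,\alpha$ for the coefficients of $x$ and $y$ in $g$. I would take the weight-one part of $\log$ of both sides of \textup{(P)}. The left side $S'_g$ contributes $\beta(t_{02}+t_{12})+\alpha t_{23}+\beta t_{01}+\alpha(t_{12}+t_{13})$. The right side $S_g\,g^{1,2,3}$ contributes $\beta t_{01}+\alpha t_{12}+\beta(t_{01}+t_{02})+\alpha(t_{13}+t_{23})+\beta t_{12}+\alpha t_{23}$. Comparing the coefficients of $t_{01}$ gives $\beta=2\beta$, and comparing those of $t_{23}$ gives $\alpha=2\alpha$. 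Hence $\alpha=\beta=0$, and $g\in\exp(\hff')$. This is also \cite[Lemma~75(1)]{FHK}, which I could cite directly.

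Next, Proposition~\ref{prop:flipequation} applies to $g\in\exp(\hff')$ and gives $P_{\rho_4}s(S_g)=s(S_g\,g^{1,2,3})$. I apply $P_{\rho_4}$ once more. Since $P_{\rho_4}$ is an algebra automorphism with $P_{\rho_4}^2=1$,
\[
s(S_g)=P_{\rho_4}s(S_g)\,P_{\rho_4}s(g^{1,2,3})=s(S_g\,g^{1,2,3})\,P_{\rho_4}s(g^{1,2,3}).
\]
Because $g^{1,2,3}\in\exp(\fd_4)$, Proposition~\ref{prop:reflection}(3) and Proposition~\ref{prop:pathflip} give $P_{\rho_4}s(g^{1,2,3})=sF_4(g^{1,2,3})=s\bigl(g(t_{23},t_{12})\bigr)$. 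Cancelling the invertible $s(S_g)$ and using that $s$ is injective, I obtain $g(t_{12},t_{23})\,g(t_{23},t_{12})=1$ in $\UU(\ft_4)$.

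This element lies in $\iota\,\UU(\ft_{\{1,2,3\}})$. There the substitution $x\mapsto t_{12}$, $y\mapsto t_{23}$ is injective on $\UU(\hff)$, as recalled before Proposition~\ref{prop:pathflip}. So $g(x,y)\,g(y,x)=1$.

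The Lie case runs in parallel. For $\psi\in\hff'$ satisfying \textup{(P$'$)}, statement \textup{(d$'$)} of Proposition~\ref{prop:flipequation} gives $P_{\rho_4}s(S_\psi)=s(S_\psi+\psi^{1,2,3})$. Applying $P_{\rho_4}$ again and using linearity, I get $s(S_\psi)=s(S_\psi+\psi^{1,2,3})+s\bigl(\psi(t_{23},t_{12})\bigr)$. Hence $\psi(t_{12},t_{23})+\psi(t_{23},t_{12})=0$, and injectivity gives $\psi(x,y)+\psi(y,x)=0$.

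The only real obstacle is the first step, showing there are no linear terms. Once that is in hand, everything reduces to the established identity $P_{\rho_4}s(S_g)=s(S'_g)$ together with involutivity.
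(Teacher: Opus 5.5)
Your proposal is correct and follows the paper's proof: apply the reflection a second time to the flip equation $P_{\rho_4}s(S_g)=s(S_g\,g^{1,2,3})$, use $P_{\rho_4}s=sF_4$ on $\UU(\fd_4)$ to get $F_4(g^{1,2,3})=g(t_{23},t_{12})$, cancel $S_g$, and conclude by injectivity of $x\mapsto t_{12}$, $y\mapsto t_{23}$. The paper phrases the same step as $F_4^2=\id$ in $\UU(\ft_4)$ where you work with $P_{\rho_4}$ in the framed algebra. Your only real difference is that you prove $g\in\exp(\hff')$ directly, by comparing the coefficients of $t_{01}$ and $t_{23}$ in weight one, instead of citing \cite[Lemma~75(1)]{FHK}; that computation is correct.
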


\begin{proof}
By \cite[Lemma~75(1)]{FHK}, $g\in\exp(\hff')$. By Proposition~\ref{prop:flipequation}, $F_4(S_g)=S_g\,g^{1,2,3}$. Apply $F_4$ and use $F_4^2=\id$, which on $\UU(\fd_4)$ is the relation $P_{\rho_4}^2=1$ read through $P_{\rho_4}s=sF_4$, together with $F_4(g^{1,2,3})=g(t_{23},t_{12})$:
\[
S_g=F_4\bigl(S_g\,g^{1,2,3}\bigr)=S_g\,g^{1,2,3}\,g(t_{23},t_{12}).
\]
Hence $g(t_{12},t_{23})\,g(t_{23},t_{12})=1$, and injectivity of $x\mapsto t_{12}$, $y\mapsto t_{23}$ on $\UU(\hff)$ gives $g(x,y)g(y,x)=1$. The Lie case is identical.
\end{proof}

Corollary~\ref{cor:twocycle} should be compared with \cite[Lemma~6]{Furusho_pentagon}, where the two-cycle relation is obtained by projecting the pentagon onto a free Lie algebra. Since $\ASTR$ is exactly the set of group-like solutions of \textup{(P)}, Corollary~\ref{cor:twocycle} holds for every associator, and Propositions~\ref{prop:reflectedpentagon} and ~\ref{prop:arity3} apply to every $\varphi\in\ASTR$. 

\begin{proof}[Proof of Theorem~\ref{thmB}(1)] The first assertion is Proposition~\ref{prop:astr}. If $g$ has no linear term in $x$, write $g=\varphi\exp(\alpha y)$ with $\varphi\in\exp(\hff')$. Then $g^{A,B,C}=\varphi^{A,B,C}\exp(\alpha t_{BC})$, so Proposition~\ref{prop:pathflip}, $F_4(t_{12})=t_{23}$ and $F_4(t_{13}+t_{23})=t_{12}+t_{13}$ give $F_4(S_g)=S'_g$, and Corollary~\ref{cor:dagger}, (i)$\Rightarrow$(iii), gives $P_{\rho_4}s(S_g)=sF_4(S_g)$. The identification with the flip equation is Proposition~\ref{prop:astr}, and the two-cycle relation is Corollary~\ref{cor:twocycle}.
\end{proof}

\subsection{The reflected pentagon}\label{sec:pentagon}

We now read Proposition~\ref{prop:flipequation} in the groupoid $\hPaRCD(4)$. Let $M$ be the involution of $\magma(4)$ which exchanges the two children at every vertex of a tree, and define
\[
\kappa_4(p)=M\bigl(\rho_4(p)\bigr),\qquad \kappa_4\bigl(u\colon p\to q\bigr)=\bigl(P_{\rho_4}(u)\colon\kappa_4(p)\to\kappa_4(q)\bigr),
\]
where $\rho_4$ acts on objects through the cyclic structure of $\magma$. Since hom-sets are constant and $P_{\rho_4}$ is an algebra automorphism, $\kappa_4$ is an automorphism of the groupoid $\hPaRCD(4)$. It is involutive because $M$ commutes with relabelling and $M^2=\id=\rho_4^2$. The mirror $M$ is not part of the cyclic structure. It only restores the leaf order $0123$, which $\rho_4$ reverses. By Proposition~\ref{prop:reflection}(3), $\kappa_4$ acts on coefficients in $s\,\UU(\fd_4)$ by $F_4$.

For $g\in\exp(\hff')$ let $\bA_g^B$ be the morphism with the same source and target as $\bA^B$ in \eqref{eq:cabledA} and with coefficient $s(g^B)$, and put $\alpha_g=\bA_g^{0,1,2}\bA_g^{0,12,3}\colon p_0\to p_2$. By Lemma~\ref{lem:pentagonPaRCD}, if $g\in\GRT_1$ then $\bA_g^B=\Theta_g(\bA^B)$.

\begin{prop}\label{prop:reflectedpentagon}
Let $g\in\exp(\hff')$. The involution $\kappa_4$ fixes $p_0$, exchanges $p_1\leftrightarrow p_4$ and $p_2\leftrightarrow p_3$, and satisfies $\kappa_4(\bA_g^{0,1,2})=\bA_g^{01,2,3}$ and $\kappa_4(\bA_g^{0,12,3})=\bA_g^{0,1,23}$. Hence $\alpha_g^{-1}\kappa_4(\alpha_g)\colon p_2\to p_3$ has coefficient $s(C_g)$, and $\alpha_g^{-1}\kappa_4(\alpha_g)=\bA_g^{1,2,3}$ if and only if $g$ satisfies \textup{(P)}. In that case $\kappa_4(\bA_g^{1,2,3})=(\bA_g^{1,2,3})^{-1}$, which is the two-cycle relation.
\end{prop}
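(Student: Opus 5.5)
The plan is to check the statement on objects first and then on coefficients. Every coefficient assertion should reduce to Propositions~\ref{prop:reflection}(3), \ref{prop:pathflip} and~\ref{prop:flipequation}.

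\emph{Objects.} Apply $\rho_4(i)=4-i$ to each tree in \eqref{eq:vertices}, viewed as a trivalent tree with flags $0,\dots,4$ and root $4$. After relabelling, the new root is the flag formerly labelled $0$, namely leaf $0$. Reading the leaves in the cyclic order after the root gives the word $3210$ with the bracketing transported along. The mirror $M$ then restores the order $0123$. For $p_0=(((01)2)3)$ the leaf $0$ is innermost. Re-rooting there gives $(0(1(2\,\cdot)))$ read from the old root side, and after relabelling and $M$ one recovers $(((01)2)3)$, so $p_0$ is fixed. Doing the same for $p_1,p_2$ should give $p_4,p_3$ respectively. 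Since $\kappa_4$ is involutive, $p_4\mapsto p_1$ and $p_3\mapsto p_2$ follow automatically. This is a finite check, and I expect it to be the main source of error, because the conventions for re-rooting and for cyclic orders at vertices must be tracked carefully. I would do it by drawing each tree with its planar embedding. A useful consistency check is that $\kappa_4$ must send the source and target of $\bA^{0,1,2}$ to those of $\bA^{01,2,3}$.

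\emph{Coefficients.} Since $g\in\exp(\hff')$, each $g^B$ lies in $\exp(\fd_4)\subset\UU(\fd_4)$. By Proposition~\ref{prop:reflection}(3), $P_{\rho_4}s(g^B)=sF_4(g^B)$. Proposition~\ref{prop:pathflip} then gives $F_4(g^{0,1,2})=g^{01,2,3}$, $F_4(g^{0,12,3})=g^{0,1,23}$ and $F_4(g^{1,2,3})=g(t_{23},t_{12})$. Because hom-sets are constant, a morphism is determined by its source, target and coefficient. So $\kappa_4(\bA_g^{0,1,2})=\bA_g^{01,2,3}$ and $\kappa_4(\bA_g^{0,12,3})=\bA_g^{0,1,23}$, using that the endpoints match by the object computation above.

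Next, $\kappa_4(\alpha_g)\colon p_0\to p_3$ has coefficient $s(S'_g)$. Composing left to right, $\alpha_g^{-1}\kappa_4(\alpha_g)\colon p_2\to p_3$ has coefficient $s(S_g)^{-1}s(S'_g)=s(C_g)$, since $F_4(S_g)=S'_g$. Now $\bA_g^{1,2,3}\colon p_2\to p_3$ has coefficient $s(g^{1,2,3})$, and $s$ is injective. Hence the equality of these two morphisms is $C_g=g^{1,2,3}$, which is equivalent to (P) by Proposition~\ref{prop:flipequation}.

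\emph{Last claim.} Since $\kappa_4$ swaps $p_2$ and $p_3$, the morphism $\kappa_4(\bA_g^{1,2,3})$ goes $p_3\to p_2$ with coefficient $s\bigl(g(t_{23},t_{12})\bigr)$. The morphism $(\bA_g^{1,2,3})^{-1}$ also goes $p_3\to p_2$, with coefficient $s(g^{1,2,3})^{-1}$. So these two morphisms are equal if and only if $g(t_{12},t_{23})g(t_{23},t_{12})=1$. Under (P) this holds by Corollary~\ref{cor:twocycle}, and it is equivalent to $g(x,y)g(y,x)=1$ by injectivity of $x\mapsto t_{12}$, $y\mapsto t_{23}$. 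This is the two-cycle relation.

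Alternatively, apply $\kappa_4$ to $\alpha_g\bA_g^{1,2,3}=\kappa_4(\alpha_g)$ and use $\kappa_4^2=\id$. This yields the same identity and reproves Corollary~\ref{cor:twocycle} inside $\hPaRCD(4)$.
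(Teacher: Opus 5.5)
Your proposal is correct and follows the paper's proof. The coefficient assertions are exactly Proposition~\ref{prop:reflection}(3) combined with Propositions~\ref{prop:pathflip}, \ref{prop:flipequation} and Corollary~\ref{cor:twocycle}, using that a morphism is determined by its source, target and coefficient.

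For the object check you left as ``should give'', the paper avoids planar bookkeeping. It records each $p_i$ by the two-element parts of its splits: $p_0:\{01,34\}$, $p_1:\{12,34\}$, $p_2:\{12,40\}$, $p_3:\{23,40\}$, $p_4:\{01,23\}$. The reflection $\rho_4$ permutes these via $01\leftrightarrow34$ and $12\leftrightarrow23$, and $M$ restores the boundary order, which confirms $p_1\leftrightarrow p_4$ and $p_2\leftrightarrow p_3$.
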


\begin{proof}
As unrooted trees with boundary order $(0,1,2,3,4)$, the $p_i$ are determined by the two-element parts of their splits, $p_0:\{01,34\}$, $p_1:\{12,34\}$, $p_2:\{12,40\}$, $p_3:\{23,40\}$, $p_4:\{01,23\}$, and $\rho_4$ relabels these by $01\leftrightarrow34$, $12\leftrightarrow23$, and $M$ restores the boundary order. On coefficients $\kappa_4$ acts on $s\,\UU(\fd_4)$ by $F_4$, so the remaining assertions are Propositions~\ref{prop:pathflip}, \ref{prop:flipequation} and Corollary~\ref{cor:twocycle}.
\end{proof}

\begin{figure}[htbp]
\centering
\begin{tikzpicture}[>=Stealth,scale=0.7,every node/.style={font=\small}]
\node (p0) at (-2.6,0) {$p_0$};
\node (p1) at (-0.7,1.55) {$p_1$};
\node (p2) at (2.2,1.1) {$p_2$};
\node (p3) at (2.2,-1.1) {$p_3$};
\node (p4) at (-0.7,-1.55) {$p_4$};
\draw[->] (p0) -- node[above left] {$\bA_g^{0,1,2}$} (p1);
\draw[->] (p1) -- node[above] {$\bA_g^{0,12,3}$} (p2);
\draw[->] (p2) -- node[right] {$\bA_g^{1,2,3}$} (p3);
\draw[->] (p0) -- node[below left] {$\bA_g^{01,2,3}$} (p4);
\draw[->] (p4) -- node[below] {$\bA_g^{0,1,23}$} (p3);
\draw[densely dashed,gray] (-2.25,0) -- (1.85,0);
\node[fill=white,inner sep=3pt] at (0,0) {$\kappa_4$};
\end{tikzpicture}
\caption{The involution $\kappa_4$ fixes $p_0$, swaps the two paths from $p_0$, and reverses the orientation of the edge between $p_2$ and $p_3$.}
\label{fig:pentagon}
\end{figure}

\section{Cyclic compatibility of the \texorpdfstring{$\GRT_1$}{GRT1}-action}\label{sec:cyclic}

We first give a criterion for an operad automorphism of $\hPaRCD$ to be cyclic.
% : it suffices to check the generators, and on each generator only the transpositions $\tau_m$ (Lemma~\ref{lem:generators}). 
We then show that the two-cycle relation is exactly the condition for $\Theta_g$ to commute with the cyclic action on the associativity morphism (Proposition~\ref{prop:arity3}). Together with the generator criterion, this shows that $\GRT_1$ acts by cyclic automorphisms of $\hPaRCD$ and completes the proof of Theorem~\ref{thmB}.

\subsection{A criterion for cyclicity}\label{subsec:criterion} 
For a cyclic operad $\mathcal P$, the partial compositions are compatible with the action of $\Sigma_{n+1}$ in the following sense \cite{gk_cyc}, \cite{mms_operads_topology_and_phys}. A composite $x\circ_ay$, with $x\in\mathcal P(m)$ and $y\in\mathcal P(k)$, is a two-vertex tree with $m+k$ labelled flags. Relabelling these flags by $\sigma\in\Sigma_{m+k}$ and re-rooting gives again a two-vertex tree. Hence there exist $b$, $\sigma'\in\Sigma_{m+1}$, $\sigma''\in\Sigma_{k+1}$ and $\pi\in\Sigma_{m+k-1}$, depending only on $(m,k,a,\sigma)$, such that for all $x,y$
\begin{equation}\label{eq:cyclic-equivariance}
P_\sigma(x\circ_ay)=P_\pi\bigl(P_{\sigma'}x\circ_bP_{\sigma''}y\bigr)\quad\text{or}\quad P_\sigma(x\circ_ay)=P_\pi\bigl(P_{\sigma''}y\circ_bP_{\sigma'}x\bigr).
\end{equation}

\begin{lemma}\label{lem:generators}
Let $\Theta\in\Aut^+_{\Op}(\hPaRCD)$. Suppose that $\Theta P_\sigma(z)=P_\sigma\Theta(z)$ for every $z\in\{\exp(\lambda\bI),\exp(\lambda\bH),\bX,\bA:\lambda\in\kk\}$ of arity $m$ and every $\sigma\in\Sigma_{m+1}$. Then $\Theta$ commutes with the whole cyclic action. Moreover, for a morphism $z$ of arity $m$, the condition $\Theta P_\sigma(z)=P_\sigma\Theta(z)$ for all $\sigma\in\Sigma_{m+1}$ holds if and only if $\Theta P_{\tau_m}(P_\pi z)=P_{\tau_m}\Theta(P_\pi z)$ for all $\pi\in\Sigma_m$.
\end{lemma}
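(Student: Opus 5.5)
The plan is to prove both assertions by a closure argument: the set of morphisms on which $\Theta$ commutes with the cyclic action is closed under everything that generates $\hPaRCD$. For each $n$ define
\[
\mathcal C_n=\bigl\{z\in\hPaRCD(n):\Theta P_\sigma(z)=P_\sigma\Theta(z)\ \text{for all }\sigma\in\Sigma_{n+1}\bigr\}.
\]
Since $\Theta$ is an automorphism of groupoids, and each $P_\sigma$ is a functor (it acts on objects by relabelling and on coefficients by an algebra automorphism), $\mathcal C=(\mathcal C_n)$ is closed under categorical composition and inverses.

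$\mathcal C$ is also closed under limits. The maps $P_\sigma$ are continuous on coefficients, and so is $\Theta$, since it is taken in prounipotent groupoids. So $\mathcal C$ is closed under completion. It also contains identities of objects, because $\Theta$ fixes objects and $P_\sigma$ sends identities to identities.

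The main step is closure under partial composition, using \eqref{eq:cyclic-equivariance}. Let $x\in\mathcal C_m$, $y\in\mathcal C_k$ and $\sigma\in\Sigma_{m+k}$. Then
\[
\Theta P_\sigma(x\circ_a y)=\Theta P_\pi\bigl(P_{\sigma'}x\circ_b P_{\sigma''}y\bigr)
=P_\pi\bigl(\Theta P_{\sigma'}x\circ_b\Theta P_{\sigma''}y\bigr).
\]
Here $\Theta$ is an operad morphism, so it commutes with $\circ_b$ and with $P_\pi$ for $\pi\in\Sigma_{m+k-1}$ (the input permutations). Using $x,y\in\mathcal C$, this equals $P_\pi\bigl(P_{\sigma'}\Theta x\circ_b P_{\sigma''}\Theta y\bigr)=P_\sigma\Theta(x\circ_a y)$. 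The swapped case of \eqref{eq:cyclic-equivariance} is identical.

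One point to check is that the same data $(\pi,b,\sigma',\sigma'')$ work for all $x,y$. This is exactly the assertion of \eqref{eq:cyclic-equivariance}, since those data depend only on $(m,k,a,\sigma)$. It therefore applies to both $x\circ_a y$ and $\Theta x\circ_a\Theta y$.

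Since the hypothesis puts the generators $\exp(\lambda\bI)$, $\exp(\lambda\bH)$, $\bX$ and $\bA$ in $\mathcal C$, and these generate $\hPaRCD$ under composition, inverses, operadic composition and completion, it follows that $\mathcal C$ is everything. This gives the first statement.

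For the second statement, first note that $\Sigma_m$ and $\tau_m$ generate $\Sigma_{m+1}$. Fix $z$ of arity $m$ and let
\[
G_z=\{\sigma\in\Sigma_{m+1}:\Theta P_\sigma(w)=P_\sigma\Theta(w)\ \text{for all } w\in\{P_\pi z:\pi\in\Sigma_m\}\}.
\]
Every $\pi\in\Sigma_m$ lies in $G_z$ automatically, because $\Theta$ is a map of symmetric operads. The hypothesis says $\tau_m\in G_z$.

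The main obstacle is that $G_z$ need not be a subgroup: the orbit $\{P_\pi z\}$ is only $\Sigma_m$-stable, not $\Sigma_{m+1}$-stable. I would instead argue element by element.

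Write $\sigma=\pi_0\tau_m\pi_1\tau_m\cdots\tau_m\pi_r$ with $\pi_i\in\Sigma_m$, and induct on $r$. The key point is that if $\Theta$ commutes with $P_{\sigma_1}$ on all of $\Sigma_m z$, then the elements $P_{\sigma_1}(P_\pi z)$ again have the property needed for the next step. To secure this, I would strengthen the inductive statement: for every word length $r$, every $\sigma$ of length at most $r$ and every $\pi$, we have $\Theta P_\sigma P_\pi z=P_\sigma\Theta P_\pi z$.

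The inductive step peels off the leftmost factor: write $\sigma=\pi_0\tau_m\sigma_1$, with $\sigma_1$ a shorter word. This needs $\Theta P_{\tau_m}$ to commute on elements of the form $P_{\sigma_1}P_\pi z$, not just on $\Sigma_m z$. I expect to resolve this by choosing normal forms in which at most one $\tau_m$ occurs. Every element of $\Sigma_{m+1}$ is $\pi$ or $\pi\tau_m\pi'$ with $\pi,\pi'\in\Sigma_m$, using the coset decomposition $\Sigma_{m+1}=\Sigma_m\sqcup\Sigma_m\tau_m\Sigma_m$.

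With this normal form the computation is direct:
\[
\Theta P_{\pi\tau_m\pi'}z=P_\pi\Theta P_{\tau_m}(P_{\pi'}z)=P_\pi P_{\tau_m}\Theta P_{\pi'}z=P_\pi P_{\tau_m}P_{\pi'}\Theta z.
\]
This settles the converse direction, and the forward direction is trivial.
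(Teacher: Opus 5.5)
Your proposal is correct and matches the paper's proof. For the first assertion you show that the set of morphisms on which $\Theta$ commutes with every $P_\sigma$ is closed under categorical composition, inverses, limits and, via \eqref{eq:cyclic-equivariance}, operadic composition. For the second you use the double coset decomposition $\Sigma_{m+1}=\Sigma_m\sqcup\Sigma_m\tau_m\Sigma_m$, which on its own settles the step, so your exploratory word-length induction can simply be dropped.
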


\begin{proof}
For the second assertion, $\Sigma_m$ is the stabiliser of $m$ in $\Sigma_{m+1}$ and acts transitively on $\{0,\dots,m-1\}$, so $\Sigma_{m+1}=\Sigma_m\sqcup\Sigma_m\tau_m\Sigma_m$. As $\Theta$ commutes with $P_\pi$ for $\pi\in\Sigma_m$, for $\sigma=\pi_1\tau_m\pi_2$ we get
\[
\Theta P_\sigma z=P_{\pi_1}\Theta P_{\tau_m}(P_{\pi_2}z)=P_{\pi_1}P_{\tau_m}\Theta(P_{\pi_2}z)=P_\sigma\Theta z
\]
whenever $\Theta$ commutes with $P_{\tau_m}$ on $P_{\pi_2}z$. The converse is clear.

For the first assertion, let $\mathcal M$ be the set of morphisms $z$, of any arity $m$, such that $\Theta P_\sigma z=P_\sigma\Theta z$ for all $\sigma\in\Sigma_{m+1}$. Since $\Theta$ and each $P_\sigma$ are continuous functors, $\mathcal M$ is closed under composition, inverses and limits. If $x,y\in\mathcal M$, then by \eqref{eq:cyclic-equivariance} and since $\Theta$ is an operad morphism,
\begin{align*}
\Theta P_\sigma(x\circ_ay)&=P_\pi\bigl(\Theta P_{\sigma'}x\circ_b\Theta P_{\sigma''}y\bigr)=P_\pi\bigl(P_{\sigma'}\Theta x\circ_bP_{\sigma''}\Theta y\bigr)\\
&=P_\sigma(\Theta x\circ_a\Theta y)=P_\sigma\Theta(x\circ_ay),
\end{align*}
and similarly in the second case of \eqref{eq:cyclic-equivariance}. Thus $\mathcal M$ is closed under operadic composition. It contains the generators by hypothesis, hence all morphisms.
\end{proof}

\subsection{The cyclic condition on the associativity morphism} Let $g\in\exp(\hff')$. We extend the assignment $\bA\mapsto g(T_{01},T_{12})\bA$ $\Sigma_3$-equivariantly: for every permutation $(a,b,c)$ of $(0,1,2)$ we put
\begin{equation}\label{eq:thetaext}
\Theta_g(A_{abc})=g(T_{ab},T_{bc})\,A_{abc},\qquad \Theta_g(A_{abc}^{-1})=\Theta_g(A_{abc})^{-1}.
\end{equation}
By Lemma~\ref{lem:tau-generators} and the argument below, $P_\sigma\bA$ is of the form $A_{abc}^{\pm1}$ for every $\sigma\in\Sigma_4$, so the condition $\Theta_gP_\sigma\bA=P_\sigma\Theta_g\bA$ makes sense.

\begin{lemma}\label{lem:injective}
Let $u,v$ be two distinct elements of $\{T_{01},T_{12},T_{02}\}\subset\fft_3$. The continuous algebra map $\UU(\hff)\to\UU(\fft_3)$, $x\mapsto u$, $y\mapsto v$, is injective. In particular $g(u,v)g(v,u)=1$ if and only if $g(x,y)g(y,x)=1$.
\end{lemma}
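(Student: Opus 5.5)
The plan is to reduce to the unframed case and then use the known structure of $\ft_3$. The map $x\mapsto u$, $y\mapsto v$ factors as
\[
\UU(\hff)\xrightarrow{\ j\ }\UU(\ft_3)\xrightarrow{\ s\ }\UU(\fft_3),
\]
where $j$ sends $x,y$ to the corresponding unframed generators $t_{ab},t_{bc}$. The map $s$ is injective: by \eqref{eq:framing-splitting} we have $\UU(\fft_3)=s\,\UU(\ft_3)\,\widehat\otimes\,\kk[[T_{00},T_{11},T_{22}]]$, and $s$ is the inclusion of the first tensor factor, with left inverse $q$. It therefore suffices to show that $j$ is injective.

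For $j$, use the decomposition quoted from \cite{Drin}: $\ft_3$ is the direct sum of the completed free Lie algebra on $t_{01},t_{12}$ and the centre $\kk c_3$. The same argument applies to any pair of distinct chords, since in $\ft_3$ each of $t_{02}$, $t_{01}$, $t_{12}$ equals $c_3$ minus the other two. Hence
\[
\ft_3=\langle t_{01},t_{12}\rangle\oplus\kk c_3,
\]
and modulo the centre any two of the three chords generate the same subalgebra. More precisely, the projection $\ft_3\to\ft_3/\kk c_3$ sends $\{u,v\}$ to a pair of free generators of $\ft_3/\kk c_3\cong\hff$. One can see this directly: for the pair $(t_{01},t_{02})$ the images are $\bar t_{01}$ and $-\bar t_{01}-\bar t_{12}$, and the substitution $(x,y)\mapsto(x,-x-y)$ is an automorphism of $\hff$.

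So the composite $\hff\to\ft_3\to\ft_3/\kk c_3$ is an isomorphism, and $\hff\to\ft_3$ is injective with image a complement of the centre. Passing to completed enveloping algebras, an injective map of complete graded Lie algebras induces an injective map of $\UU$ by PBW; alternatively, compose with $\UU(\ft_3)\to\UU(\ft_3/\kk c_3)\cong\UU(\hff)$, which is an automorphism of $\UU(\hff)$. This gives injectivity of $j$ and hence of the original map.

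For the final assertion, $g(u,v)g(v,u)-1$ is the image of $g(x,y)g(y,x)-1$ under this algebra map, because substitution is a continuous algebra morphism and swapping the roles of $x$ and $y$ gives $g(v,u)$. Injectivity then yields the equivalence.

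The only delicate point is the identification of $\ft_3/\kk c_3$ with the free Lie algebra on the images of an arbitrary pair of distinct chords, together with the passage from Lie-level to enveloping-algebra injectivity in the completed setting. I would handle both by the explicit linear automorphism of generators above and by graded PBW.
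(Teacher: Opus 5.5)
Your proof is correct and follows essentially the paper's route. The paper quotients $\fft_3$ in one step by the central ideal $\calZ$ spanned by $T_{00},T_{11},T_{22},s(c_3)$ and notes that $\UU(\hff)\to\UU(\fft_3)\to\UU(\fft_3/\calZ)$ is an isomorphism because any two chord classes form a free generating pair. You do the same thing in two steps: first you remove the framings using the left inverse $q$ of $s$, and then you quotient $\ft_3$ by $\kk c_3$.
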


\begin{proof}
Let $\calZ\subset\fft_3$ be the central ideal spanned by $T_{00},T_{11},T_{22}$ and $s(c_3)$. By \eqref{eq:framing-splitting} and \cite{Drin}, $\fft_3/\calZ\cong\ft_3/\kk c_3$ is freely generated by the classes of $T_{01}$ and $T_{12}$, and the class of $T_{02}$ is $-T_{01}-T_{12}$. Any two of the three classes form a free generating pair, since $(x,y)\mapsto(-x-y,y)$ and $(x,y)\mapsto(x,-x-y)$ are automorphisms of $\hff$. Hence the composite $\UU(\hff)\to\UU(\fft_3)\to\UU(\fft_3/\calZ)$ is an isomorphism.
\end{proof}

\begin{prop}\label{prop:arity3}
Let $g\in\exp(\hff')$. The following are equivalent:
\begin{enumerate}[label=\textup{(\roman*)}]
\item $\Theta_gP_\sigma\bA=P_\sigma\Theta_g\bA$ for every $\sigma\in\Sigma_4$.
\item $\Theta_gP_{\tau_3}\bA=P_{\tau_3}\Theta_g\bA$.
\item $g(x,y)\,g(y,x)=1$.
\end{enumerate}
\end{prop}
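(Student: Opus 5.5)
The plan is to prove (i)$\Rightarrow$(ii)$\Rightarrow$(iii)$\Rightarrow$(i). The first implication is trivial, since $\tau_3\in\Sigma_4$. The real content is an explicit computation of both sides of (ii) as morphisms $(1(20))\to((12)0)$ in $\hPaRCD(3)$. This reduces (ii) to an identity between coefficients, which Lemma~\ref{lem:injective} converts into the two-cycle relation.

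For (ii)$\Leftrightarrow$(iii), I would first compute $P_{\tau_3}\Theta_g\bA=P_{\tau_3}\bigl(g(T_{01},T_{12})\bA\bigr)$. The objects are moved exactly as in Lemma~\ref{lem:tau-generators}, so the source is $(1(20))$ and the target is $((12)0)$. By \eqref{eq:tau}, the coefficient is
\[
g\bigl(P_{\tau_3}T_{01},T_{12}\bigr)=g\bigl(-T_{01}-T_{11}-T_{12},\,T_{12}\bigr).
\]
Here $-T_{01}-T_{12}=T_{02}-s(c_3)$, and both $T_{11}$ and $s(c_3)$ commute with $T_{02}$ and $T_{12}$. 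Since $g\in\exp(\hff')$, the shift property \eqref{eq:shift} reduces the coefficient to $g(T_{02},T_{12})$.

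On the other side, Lemma~\ref{lem:tau-generators} and \eqref{eq:thetaext} give
\[
\Theta_gP_{\tau_3}\bA=\Theta_g(A_{120})^{-1}=\bigl(g(T_{12},T_{20})A_{120}\bigr)^{-1}.
\]
Because hom-sets are constant and composition multiplies coefficients, this is the morphism $(1(20))\to((12)0)$ with coefficient $g(T_{12},T_{02})^{-1}$. So (ii) holds if and only if $g(T_{02},T_{12})\,g(T_{12},T_{02})=1$. Lemma~\ref{lem:injective}, applied with $u=T_{02}$ and $v=T_{12}$, shows this is equivalent to $g(x,y)g(y,x)=1$.

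For (iii)$\Rightarrow$(i), I use the second assertion of Lemma~\ref{lem:generators}. It suffices to check $\Theta_gP_{\tau_3}(A_{abc})=P_{\tau_3}\Theta_g(A_{abc})$ for all six permutations $(a,b,c)$, since $\Theta_g$ is $\Sigma_3$-equivariant by construction \eqref{eq:thetaext}. Inverses then follow automatically. For each $A_{abc}=P_\pi\bA$, I would compute $P_{\tau_3}A_{abc}$ by relabelling the flags and re-rooting. It is again some $A_{a'b'c'}^{\pm1}$; the case where $0$ is the middle letter $b$ differs from the cases where $0$ is an outer letter.

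Repeating the coefficient calculation above, each case reduces, after removing central framings and $s(c_3)$ via \eqref{eq:shift}, to one of two kinds of condition. Either it is a tautology of the form $g(u,v)=g(u,v)$, or it is a two-cycle relation $g(u,v)g(v,u)=1$ for two distinct elements $u,v$ of $\{T_{01},T_{12},T_{02}\}$. Lemma~\ref{lem:injective} then shows that every such condition follows from (iii).

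The main obstacle is this bookkeeping: tracking the objects and orientations correctly under re-rooting for all six $A_{abc}$, and confirming that the framing produced by $P_{\tau_3}T_{0j}$ is always a central term that \eqref{eq:shift} absorbs. I would first try to avoid the full case analysis. The idea is to conjugate by $\Sigma_3$ and use $\tau_3\pi=\pi\tau_3$ for permutations $\pi$ fixing $0$ and $3$; this reduces the check to the two orbit representatives $\bA$ (already done) and $A_{102}$, and the remaining representative is handled by the same computation.
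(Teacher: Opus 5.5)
Your proposal is correct and follows essentially the same route as the paper: you reduce (ii) via \eqref{eq:tau}, the shift property and Lemma~\ref{lem:injective} to the coefficient identity $g(T_{02},T_{12})=g(T_{12},T_{02})^{-1}$, then obtain (i) from the $\Sigma_3$-equivariance built into \eqref{eq:thetaext} and the double-coset argument of Lemma~\ref{lem:generators}, where the cases with $0$ as the middle letter are tautological. For your shortcut, note that $(1\,2)$ alone gives three orbits on the $A_{abc}$; to cut down to $\bA$ and $A_{102}$ you also need that compatibility on $z$ implies compatibility on $P_{\tau_3}z$ (since $P_{\tau_3}^2=1$) and on $z^{-1}$, which is exactly the fact the paper uses.
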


\begin{proof}
Write $\tau=\tau_3$ and $\gamma_{abc}=g(T_{ab},T_{bc})$. Proceeding as in the proof of Lemma~\ref{lem:tau-generators}, one finds
\[
P_\tau A_{012}=A_{120}^{-1},\qquad P_\tau A_{021}=A_{210}^{-1},\qquad P_\tau A_{102}=A_{201}.
\]
For instance, $((10)2)$ has boundary order $(1,0,2,3)$ and split $\{1,0\}\,|\,\{2,3\}$. After relabelling by $(0\,3)$ and re-rooting at $3$ it becomes $((20)1)$, and $(1(02))$ becomes $(2(01))$. Since $P_\tau^2=1$ and $P_\tau$ is a functor, the three remaining cases are $P_\tau A_{120}=A_{012}^{-1}$, $P_\tau A_{210}=A_{021}^{-1}$ and $P_\tau A_{201}=A_{102}$. For $z=A_{abc}$ and $P_\tau z=A_{a'b'c'}^{\epsilon}$ both $\Theta_gP_\tau z$ and $P_\tau\Theta_g z$ have the same source and target, with coefficients $\gamma_{a'b'c'}^{\epsilon}$ and $P_\tau(\gamma_{abc})$, so compatibility on $z$ reads $P_\tau(\gamma_{abc})=\gamma_{a'b'c'}^{\epsilon}$. By \eqref{eq:output},
\[
\begin{gathered}
P_\tau(T_{01})=T_{31}=T_{02}-s(c_3)-T_{11},\quad P_\tau(T_{02})=T_{32}=T_{01}-s(c_3)-T_{22},\\
P_\tau(T_{12})=T_{12},
\end{gathered}
\]
and $s(c_3)+T_{11}$, $s(c_3)+T_{22}$ are central in $\fft_3$. By \eqref{eq:shift}, applied to each argument:
\begin{itemize}
\item for $z=A_{012}$: $P_\tau(\gamma_{012})=g(T_{02},T_{12})$ and $\gamma_{120}^{-1}=g(T_{12},T_{02})^{-1}$,
\item for $z=A_{021}$: $P_\tau(\gamma_{021})=g(T_{32},T_{12})=g(T_{01},T_{12})$ and $\gamma_{210}^{-1}=g(T_{12},T_{01})^{-1}$,
\item for $z=A_{102}$: $P_\tau(\gamma_{102})=g(T_{31},T_{32})=g(T_{02},T_{01})=\gamma_{201}$.
\end{itemize}
The third condition always holds. By Lemma~\ref{lem:injective}, each of the first two is equivalent to (iii). In particular (ii)$\Leftrightarrow$(iii). Compatibility on $z$ is equivalent to compatibility on $P_\tau z$ and on $z^{-1}$. Hence, if (iii) holds, then $\Theta_g$ commutes with $P_\tau$ on every $P_\pi\bA$, $\pi\in\Sigma_3$. The argument for the second assertion of Lemma~\ref{lem:generators} uses only that $\Theta_g$ commutes with $P_\pi$, $\pi\in\Sigma_3$, on the morphisms $A^{\pm1}_{abc}$, which holds by \eqref{eq:thetaext}. Hence (i) holds. Clearly (i)$\Rightarrow$(ii). 
\end{proof}

% Finally, we establish the cyclicity of the $\GRT_1$-action.
\begin{prop}\label{prop:cyclicity}
Let $g\in\GRT_1$. The automorphism $\Theta_g$ of Theorem~\ref{thm:classification} lies in the subgroup $\Aut^+_{\cyc,1}(\hPaRCD)$.
\end{prop}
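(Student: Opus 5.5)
By Theorem~\ref{thm:classification}, $\Theta_g$ already lies in $\Aut^+_{\Op,1}(\hPaRCD)$. It therefore remains to show that $\Theta_g$ commutes with every $P_\sigma$. We use Lemma~\ref{lem:generators}, which reduces this to checking, for each generator $z\in\{\exp(\lambda\bI),\exp(\lambda\bH),\bX,\bA\}$ of arity $m$, that $\Theta_gP_{\tau_m}(P_\pi z)=P_{\tau_m}\Theta_g(P_\pi z)$ for all $\pi\in\Sigma_m$. We treat the four generators in turn.

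For $\exp(\lambda\bI)$ in arity one, Lemma~\ref{lem:tau-generators} gives $P_{\tau_1}\bI=\bI$, and $\Theta_g$ fixes $\exp(\lambda\bI)$, so the condition is immediate. For $\exp(\lambda\bH)$ in arity two, $P_{\tau_2}\bH=-T_{01}-T_{11}$ at $(10)$. This is $-P_{(01)}\bH-T_{11}$, where $T_{11}$ is a framing at $(10)$. Since $\Theta_g$ commutes with $P_{(01)}$, fixes $\bH$, and is the identity on the central summand $\calZ_2$ (it was constructed in Theorem~\ref{thm:classification} by extending $G_g$ by the identity on framings), $\Theta_g$ fixes $\exp(\mu(-T_{01}-T_{11}))$. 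Hence both sides agree. The same argument applies to $P_{(01)}\bH$, which is again $\bH$ at the relabelled object. For $\bX$, we have $P_{\tau_2}\bX=\bX^{-1}$ and $\Theta_g(\bX)=\bX$, so both sides equal $\bX^{-1}$. The case $P_{(01)}\bX=\bX^{-1}$ (with source and target swapped) is handled the same way.

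The only substantive case is $\bA$. Here Proposition~\ref{prop:arity3}, (iii)$\Rightarrow$(i), shows that commutation with all of $\Sigma_4$ on $\bA$ is equivalent to the two-cycle relation $g(x,y)g(y,x)=1$. Since $g\in\GRT_1$ satisfies (P), Corollary~\ref{cor:twocycle} gives this relation. We must also check that the $\Sigma_3$-equivariant extension \eqref{eq:thetaext} agrees with the operad automorphism $\Theta_g$ of Theorem~\ref{thm:classification} on each $A_{abc}$. This holds because $\Theta_g$ commutes with $P_\pi$ for $\pi\in\Sigma_3$, and $P_\pi(g(T_{01},T_{12}))=g(T_{ab},T_{bc})$.

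Combining the four cases, Lemma~\ref{lem:generators} gives that $\Theta_g$ commutes with the whole cyclic action, so $\Theta_g\in\Aut^+_{\cyc,1}(\hPaRCD)$. The main point to watch is the $\bH$ case: the framing $T_{11}$ produced by $P_{\tau_2}$ leaves $\hPaCD$, so we need that $\Theta_g$ fixes framings and, more precisely, fixes $\exp$ of the combination $-T_{01}-T_{11}$. This follows because $\calZ_2$ is central and $\Theta_g$ acts on $\fft_2=s\ft_2\oplus\calZ_2$ as $G_g\oplus\id$, with $G_g$ fixing $t_{01}$.
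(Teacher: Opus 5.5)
Your proof is correct and follows the paper's argument: reduce via Lemma~\ref{lem:generators}, dispose of $\exp(\lambda\bI)$, $\exp(\lambda\bH)$ and $\bX$ directly, and handle $\bA$ with Proposition~\ref{prop:arity3} and Corollary~\ref{cor:twocycle}. The only difference is in arities one and two. You check each $P_{\tau_m}$ case separately, while the paper notes that $\Theta_g$ is the identity on all of $\hPaRCD(1)$ and $\hPaRCD(2)$, because $\fft_2$ at $(01)$ is spanned by $1_\mu\circ_0\bI$, $1_\mu\circ_1\bI$ and $\bH$ and $\bX$ is fixed, so commutation with the arity-preserving $P_\sigma$ there is automatic.
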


\begin{proof}
We verify the hypothesis of Lemma~\ref{lem:generators}. The restriction of $\Theta_g$ to $\hPaRCD(1)$ and $\hPaRCD(2)$ is the identity. Indeed, $\hPaRCD(1)$ has the single object $(0)$ with automorphism group $\exp(\kk\bI)$. In arity two, the Lie algebra $\fft_2$ is abelian and spanned by $T_{00}$, $T_{11}$ and $T_{01}$, which at $(01)$ are the insertions $1_\mu\circ_0\bI$, $1_\mu\circ_1\bI$ and $\bH$. These are fixed by $\Theta_g$, hence so is $\Aut((01))=\exp(\fft_2)$, and so are $\bX$ and $\Aut((10))=\bX^{-1}\Aut((01))\bX$. Since each $P_\sigma$ preserves arity, $\Theta_g$ commutes with the cyclic action on $\exp(\kk\bI)$, $\exp(\kk\bH)$ and $\bX$. On $\bA$, $g$ satisfies the two-cycle relation by Corollary~\ref{cor:twocycle}, so Proposition~\ref{prop:arity3} applies. Lemma~\ref{lem:generators} now shows that $\Theta_g$ is cyclic.
\end{proof}

\begin{proof}[Proof of Theorem~\ref{thmB}(2)] This is Proposition~\ref{prop:arity3}. For the final statement, by Proposition~\ref{prop:cyclicity}, $g\mapsto\Theta_g$ maps $\GRT_1$ into $\Aut^+_{\cyc,1}(\hPaRCD)\subset\Aut^+_{\Op,1}(\hPaRCD)$, and by Theorem~\ref{thm:classification} and equation~\eqref{eq:grouplaw} it is a group isomorphism onto $\Aut^+_{\Op,1}(\hPaRCD)$. Hence $\Aut^+_{\cyc,1}(\hPaRCD)=\Aut^+_{\Op,1}(\hPaRCD)$.
\end{proof}

\bibliographystyle{plain}
\bibliography{bib}

\end{document}